\newtheorem{theorem}{Theorem}[section]
\newtheorem{proposition}{Proposition}[section]
\newtheorem{lemma}{Lemma}[section]
\newtheorem{corollary}{Corollary}[section]
\theoremstyle{definition}
\newtheorem{definition}{Definition}
\newtheorem{remark}{Remark}[section]
\newtheorem{assumption}{Assumption}
\numberwithin{equation}{section}
\newcommand{\beq}{\begin{equation}}
\newcommand{\bea}[1]{\begin{array}{#1} }
\newcommand{\eeq}{ \end{equation}}
\newcommand{\ea}{ \end{array}}
\DeclareMathOperator{\sign}{sign}
\def\mean#1{\mathchoice%
          {\mathop{\kern 0.2em\vrule width 0.6em height 0.69678ex depth -0.58065ex
                  \kern -0.8em \intop}\nolimits_{\kern -0.4em#1}}%
          {\mathop{\kern 0.1em\vrule width 0.5em height 0.69678ex depth -0.60387ex
                  \kern -0.6em \intop}\nolimits_{#1}}%
          {\mathop{\kern 0.1em\vrule width 0.5em height 0.69678ex
              depth -0.60387ex
                  \kern -0.6em \intop}\nolimits_{#1}}%
          {\mathop{\kern 0.1em\vrule width 0.5em height 0.69678ex depth -0.60387ex
                  \kern -0.6em \intop}\nolimits_{#1}}}
\def\vintslides_#1{\mathchoice%
          {\mathop{\kern 0.1em\vrule width 0.5em height 0.697ex depth -0.581ex
                  \kern -0.6em \intop}\nolimits_{\kern -0.4em#1}}%
          {\mathop{\kern 0.1em\vrule width 0.3em height 0.697ex depth -0.604ex
                  \kern -0.4em \intop}\nolimits_{#1}}%
          {\mathop{\kern 0.1em\vrule width 0.3em height 0.697ex depth -0.604ex
                  \kern -0.4em \intop}\nolimits_{#1}}%
          {\mathop{\kern 0.1em\vrule width 0.3em height 0.697ex depth -0.604ex
                  \kern -0.4em \intop}\nolimits_{#1}}}
\newcommand{\aveint}[2]{\mathchoice%
          {\mathop{\kern 0.2em\vrule width 0.6em height 0.69678ex depth -0.58065ex
                  \kern -0.8em \intop}\nolimits_{\kern -0.45em#1}^{#2}}%
          {\mathop{\kern 0.1em\vrule width 0.5em height 0.69678ex depth -0.60387ex
                  \kern -0.6em \intop}\nolimits_{#1}^{#2}}%
          {\mathop{\kern 0.1em\vrule width 0.5em height 0.69678ex depth -0.60387ex
                  \kern -0.6em \intop}\nolimits_{#1}^{#2}}%
          {\mathop{\kern 0.1em\vrule width 0.5em height 0.69678ex depth -0.60387ex
                  \kern -0.6em \intop}\nolimits_{#1}^{#2}}}
\def\eqn#1$$#2$${\begin{equation} \label#1#2\end{equation}}
\def\charfn_#1{{\raise1.2pt\hbox{$\chi
_{\kern-1pt\lower3pt\hbox{{$\scriptstyle#1$}}}$}}}
\def\qq1{q_*}
\def\q2{q_{**}}
\newdimen\vintbar
\def\vint{-\kern-\vintbar\int}
\def\0{\boldsymbol 0}
\newtoks\by
\newtoks\paper
\newtoks\book
\newtoks\jour
\newtoks\yr
\newtoks\pages
\newtoks\vol
\newtoks\publ
\def\name[#1, #2]{#1 #2}
\def\ota{{\hbox{\bf ???}}}
\def\cLear{\by = \ota\paper = \ota\book = \ota\jour = \ota\yr = \ota
\pages = \ota\vol = \ota\publ = \ota}
\def\endpaper{\the\by, \textit{\the\paper},
{\the\jour} \textbf{\the\vol} (\the\yr), \the\pages.\cLear}
\def\endbook{\the\by, \textit{\the\book},
\the\publ, \the\yr.\cLear}
\def\endpap{\the\by, \textit{\the\paper}, \the\jour.\cLear}
\def\endproc{\the\by, \textit{\the\paper}, \the\book, \the\publ,
\the\yr, \the\pages.\cLear}
\begin{document}

\title[Distribution tail asymptotics for one dimensional SDE]{On the distribution tail of stochastic differential equations: The one-dimensional case}

\author{ Sidi Mohamed Aly }
\address{Sidi Mohamed Aly\\ Centre for Mathematical Sciences, Mathematical Statistics\\
Lund University, Box 118, SE-221 00 Lund, Sweden}
\email{souldaly@gmail.com} %sidi@math.lth.se}

\begin{abstract}
%In a previous article (\cite{Aly13}) the author obtained a Tauberian result  which relates explicitly the moment generating function (MGF) and the complementary cumulative distribution function (CCDF) of a random variable whose MGF is known and finite only on part of the real line. This paper derives a similar result without assuming the knowledge of the explicit expression of the MGF.  Instead an upper bound and lower bound for the MGF as assumed to be known.
This paper considers a general one-dimensional stochastic differential equation (SDE). A particular attention is given to the SDEs that may be transformed (via Ito's formula) into:\begin{equation}\label{SDE-nonlinear-Drift}
d X_t = ( \bar{B} (X_t)  - b X_t) d t + \sqrt{X_t} d W_t, ~~~X_0 > 0,
\end{equation}
where $ \bar{B}(y)/ y \to 0$. It is shown that the MGF of $X_t$ explodes at  a critical moment $\mu^\ast_t$ which is independent of $\bar{B}$. Furthermore, this MGF is given as a sum of  the MGF of a Cox-Ingersoll-Ross process plus an extra term which is given by a nonlinear partial differential equation (PDE) on $\partial_t$ and $\partial_x$.  The  existence and the uniqueness of the solution of the nonlinear PDE is then proved using the inverse function theorem in a Banach space that will be defined in the paper. As an application, the mean reverting equation 
\begin{equation}\label{mean-reverting-abstract}
d V_t = ( a - b V_t) d t + \sigma V^p_t d W_t, ~~~V_0 = v_0 > 0,
\end{equation}
is extensively studied where some sharp asymptotic expansions of its MGF as well as its complementary cumulative distribution (CCDF) are derived.

 \medskip

\noindent
2000  {\em Mathematics Subject Classification : } 60E10; 62E20; 40E05.
\noindent

\medskip

\noindent
{\it Keywords and phrases: Tauberian theorems; Moment generating function; Moment Explosion;Tail asymptotic; CIR process; CKLS model; The inverse function theorem.}

\end{abstract}

\maketitle

\section{introduction}
\noindent In a previous article (\cite{Aly13}) we derived a formula which relates explicitly the moment generating function (MGF) and the complementary cumulative distribution function (CCDF)  of a random variable whose MGF is finite only on part of the real line. In situations  where MGF is available we obtain  the right tail behavior of the cumulative distribution function by only looking at the behavior of MGF near the critical moment. The main result in \cite{Aly13} is the following: we consider a random variable $Z$ whose moment generating function $ \mathbb{E}~ e^{ \mu Z} =: e^{\Lambda(\mu)}$ is known. If the function $\Lambda$ is finite in $[0,\mu^\ast[$ and explodes at $\mu^\ast$, for some $\mu^\ast>0$, and if the function $x \longmapsto \Lambda(\mu^\ast - \frac{1}{x})  $ behaves in a certain way (regularly varying with index $\alpha $ + some nonrestrictive conditions) as $x$ tends to $\infty$ then the Fenchel-Legendre transform of $\Lambda$ is  a good asymptotic approximation of the logarithm of  cumulative distribution of $X$ with $\limsup$-$\liminf$  arguments:
\begin{equation}\label{lim_sup}
\limsup_{x \to \infty} \frac{ \ln \mathbb{P} (X \geq x) + \Lambda^\ast (x)}{\ln (x)} \in \left[  - \frac{\alpha + 2}{2 (\alpha + 1)}, 0 \right]Ê Ê
\end{equation}
and 
\begin{equation}\label{lim_inf}
\liminf_{x \to \infty} \frac{ \ln \mathbb{P} (X \geq x) + \Lambda^\ast (x)}{\ln (x)} \le   - \frac{\alpha + 2}{2 (\alpha + 1)}, Ê
\end{equation}
where $\alpha$ is such that $x \longmapsto \Lambda(\mu^\ast - \frac{1}{x}) \in R_\alpha  $ (regularly varying with index $\alpha$). This result was motivated by several examples in the literature  where the moment generating function may be obtained explicitly. This is the case in several time-changed L\'evy models (see e.g. \cite{Aly13} and \cite{BenaimFriz08}). It has also been shown in \cite{Aly13} that any combination of the CIR process:
\[
d V_t = (a - bV_t) dt + \sigma \sqrt{V_t} d W_t,
\]
 and its integral satisfies this condition.

In this paper we consider a general class of one-dimensional stochastic differential equations:
\begin{equation}\label{main_eq}
d X_t = b(X_t) d t + \sigma(X_t) d W_t,~~~~X_0  > 0,
\end{equation}
 where we do not necessarily have an explicit expression for the MGF.  We first derive a new Tauberian result which is an improvement on (\ref{lim_sup}) and (\ref{lim_inf}). The new Theorem does not assume that the MGF is known or regularly varying. Instead it only needs to be bounded for below and above by two power functions. The result is an expansion of the complementary cumulative distribution in terms of the MGF: 
\begin{equation}\label{}
\mathbb{P} (X \geq x)~\sim~ e^{ - \Lambda^\ast (x)}  ~\frac{ \sqrt{ { p^\ast}' (x) } }{ p^\ast (x) \sqrt{2 \pi}}  ,
\end{equation}
where $\Lambda^\ast$ is the Fenchel-Legendre transform of $\Lambda: \mu \mapsto \ln \mathbb{E} e^{ \mu X}Ê$ and $ p^\ast (x) ={ \Lambda^\ast }' (x)$. 

The new result links the cumulative distribution and the MGF for a random variable that is bounded by two random variables whose MGFs explode at the same critical moment. This does not seem to help us obtain MGF nor CCDF, as we only link the "unknown" MGF to the "unknown" MGF, yet throughout this paper we shall see how this result will allow us to reduce the problem of obtaining the MGF (near its critical moment) for a sophisticated stochastic differential equation to solving a simple nonlinear partial differential equation. The main steps are the following: 
\begin{enumerate}[label =(\roman*)]
\item Using the comparison theorem, we "squeeze" our process between two processes (they will be two Cox-Ingersoll-Ross (CIR) processes), where the MGF is known;
\item We  find  (via Ito's formula) that the function $ \psi (t, \mu; \zeta): (t,\mu ; \zeta) \mapsto \mathbb{E} ( v^\zeta e^{ \mu V^\lambda_t})$ is a solution to a partial differential equation of the form $\partial_t \psi = \sum \nu_i \psi (t, \mu; \zeta + \chi_i)$ for some $ \nu_1, \dots, \chi_1, \chi_2 \dots$.  We then consider the function $\Delta(t,x) := \ln \psi (t, \mu^\ast - \frac{1}{x}; 0) $. The time derivative of $\Delta$ will depend on $ \frac{  \psi (t, \mu^\ast - \frac{1}{x}; \chi_i)}{ \psi (t, \mu^\ast - \frac{1}{x}; 0)}$. Will then come the role of the main Tauberian result of this paper which gives an expansion of $ \frac{  \psi (t, \mu^\ast - \frac{1}{x}; \chi_i)}{ \psi (t, \mu^\ast - \frac{1}{x}; 0)}$ for large $x$; the result is a nonlinear partial differential equation (PDE) for $\Delta$ only on $\partial_t$ and $\partial_x$. 
\item The existence and uniqueness of "the" solution of this PDE will be proven in a suitable Banach space using the inverse function theorem.
 \end{enumerate}

Our previous work in \cite{Aly13}  gave a pathway between the behavior to the moment generating function near its critical moment and the right tail of the cumulative distribution. The main result  of  \cite{Aly13} was an improvement on the results of Benaim and Friz in \cite{BenaimFriz08} who proved that if the MGF is regularly varying near $\mu^\ast$ then $ \ln \mathbb{P}( X >x) \sim - \mu^\ast x$. The proof of this statement relies on classical Tauberian theorems (see \cite{Bingham87} for more insight in Karamata's theory). And to our knowledge, this was the first result that allowed to going from the moment explosion (plus some condition on the behavior of the MGF near the critical moment) to $ \ln \mathbb{P}( X >x) \sim - \mu^\ast x$, in the vast theory of regular variation. We draw attention that regular variation theory is not the unique approach to derive asymptotic formulas for the distribution tails from the knowledge of the MGF near its critical moment; saddle point arguments allow in certain cases to derive some sharp asymptotics for the density, mainly using Mellin inversion formula; a good example is given in \cite{friz11}. Of course our context is different from the Saddle point arguments as we do not even have (yet) an explicit formula for the moment generating function and perhaps an even more important point is that there will be no mention of the density function in this paper. Indeed \textit{ neither the existence of a smooth density nor the uniqueness (strong or weak) of the solution of the SDE is required in this paper}. Some other works use different techniques, such as Large deviation and Malliavin calculus,  to treat some special types of SDEs. For example, in the case mean-reverting SDEs (with diffusion term given as a power function)  Conforti et all (\cite{Stefano15}) prove a pathwise large deviation principle for a rescaled process $ V^\epsilon =  \epsilon^\frac{1}{1-p} V $ when $p \in [\frac{1}{2}, 1[$. This allows deriving leading order asymptotics for path functionals of the process (namely $V_t$, $ \sup_{t \le T} V_t$ and  $\int_0^t V_s d s$). We study this type of SDE in this paper and derive a sharp asymptotic expansion for its MGF as well as its CCDF.

We end this section with a quick overview of the structure of the present paper. In Section~\ref{sec2}, after recalling  the main Tauberian result of \cite{Aly13}, we present a new Tauberian theorem.  In Section~\ref{sec3} we derive all the results related to a large class of one-dimensional SDEs; we first recall the results related to the Cox-Ingersoll-Ross process. We then study the SDE (\ref{SDE-nonlinear-Drift}) and we deduce the results related to a general one dimensional SDE via an easy transformation. We apply our results to (\ref{mean-reverting-abstract}) in Section~\ref{sec4}. The appendix contains the technical details and proofs of the results presented in Section~\ref{sec2} and Section~\ref{sec3}

 \section{Tauberian relation between the moment explosion and the distribution tails}\label{sec2}
 \noindent We first give a summary of the main Tauberian result derived in \cite{Aly13} followed by a  new Theorem which is an improvement on the main theorem of \cite{Aly13}.  
  
  Throughout this paper $(\omega, \mathcal{F}, (\mathcal{F}_t)_{t\geq 0}, \mathbb{P})$ is a complete filtered probability space satisfying the usual conditions and $\mathbb{E}$ refers to the expectation under $\mathbb{P}$. Consider a random variable $X$  whose moment generating function explodes at a critical moment $\mu^\ast < \infty.$ With an easy application of Markov inequality we can derive an upper bound for the cumulative distribution of $X$ as follows:
  \[
  \mathbb{P} (X>x)   \le e^{ - \sup_{0<\mu<\mu^\ast} ( \mu x - \Lambda (\mu))} =: e^{ - \Lambda^\ast (x)} 
~\Longrightarrow
\limsup_{x \to \infty} \frac{ \ln (\mathbb{P} (X>x)) + \Lambda^\ast (x)   }{\ln(x)} \le 0,
  \]
  where we used the same notation as \cite{Aly13}: $ \Lambda(\mu) := \ln (\mathbb{E} e^{ \mu X})$ and $ \Lambda^\ast$ is the Fenchel-Legendre of $ \Lambda$. The key idea in \cite{Aly13} is that instead of searching for a lower bound for the "$\liminf$",  we search for an upper bound for the "$\liminf$" and a lower bound for "$\limsup$".  Those bounds are obtained by using the following representation for the exponential function:
\[
  e^{p X} =  e^{ p( c \wedge X )} + p \int_c^\infty e^{ p z} 1_{ z \le X} d z .
\]
In particular, if we consider $p^\ast(x)$ defined for $x$ sufficiently large so that:
\[
\Lambda^\ast(x) = p^\ast(x) x - \Lambda(p^\ast(x)),
\]
  we have
  \begin{equation}\label{eq_mgf}
  e^{ \Lambda (p^\ast (x))} =  \mathbb{E} (   e^{ p^\ast (x) ( x^\beta \wedge X ) } ) + p^\ast (x) \int_{ x^\beta}^\infty e^{ p^\ast (x) z} \mathbb{P} ( X>z) d z .
  \end{equation}
  To derive a lower bound for the "$\limsup$" we prove that it can not be smaller than certain $\nu^\ast < 0$ to be determined under some assumption. We consider $ \nu' > \limsup_{x \to \infty} \frac{ \ln (\mathbb{P} (X>x)) + \Lambda^\ast (x)   }{\ln(x)} $; this means that for $z$ sufficiently large we have
  \[
  \mathbb{P} ( X> z ) \le e^{ \Lambda^\ast (z) + \nu' \ln (z) }.
  \]
  Plugging this into (\ref{eq_mgf}) and noticing that $ \mathbb{E} (  e^{ p^\ast (x) ( x^\beta \wedge X ) } ) \le  e^{ p^\ast (x) x^\beta} $ we have, choosing $ \beta$ such that $ x^\beta \ll \Lambda( p^\ast (x) )$,
  \[
  e^{ \Lambda (p^\ast (x))} \le 
  e^{ p^\ast (x) x^\beta} + p^\ast(x) e^{ \Lambda(p^\ast(x)) } x^{\nu'+1} \int_{ x^{\beta-1} }^\infty z^{ \nu'} ~ e^{  \chi_x (z)} d z,
 \]
 where  $\beta$ is chosen so that $x^\beta \ll \Lambda(p^\ast (x))  $  andÊ $\chi_x(.)$ is defined by 
 \begin{equation}\label{psi_x_z}
 \chi_x (z) = (p^\ast(x)- p^\ast(xz )) x z + \Lambda(p^\ast(xz ))- \Lambda(p^\ast(x)).
 \end{equation}
Here comes the role of the key (Tauberian) Lemma of \cite{Aly13} that states that, under suitable assumption on the function $p^\ast$,  we have,  for any $\nu \in \mathbb{R} $,
\begin{equation}\label{lemma_aly13}
\lim_{ x \to \infty}  \ln \left(  \int_{ x^{\beta-1} }^\infty z^{ \nu} ~ e^{  \chi_x (z)} d z \right)/\ln(x) = \frac{- \alpha}{2(\alpha + 1)}Ê.
\end{equation}
This is enough to show that $\nu'$ can not be smaller than $ \frac{-(\alpha + 2)}{2(\alpha + 1)} $.  Should this not be the case,  we would have for $x$ sufficiently large,
\[
e^{ \Lambda(p^\ast(x)} \le p^\ast(x) e^{ \Lambda(p^\ast(x)} ~Êx^{ -\zeta}, ~~~ \textrm{where}~~  \zeta := \nu' +  \frac{\alpha + 2}{2(\alpha + 1)}<0.
\]
This is obviously impossible, and it shows that $ \limsup_{x \to \infty} \frac{ \ln (\mathbb{P} (X>x)) + \Lambda^\ast (x)   }{\ln(x)}  \geq -  \frac{\alpha + 2}{2(\alpha + 1)} $. Using similar argument,  we prove that "$\liminf ()$" can not be larger than $ \frac{-(\alpha + 2)}{2(\alpha + 1)} $.

The results of \cite{Aly13} are derived under the assumption that $ x \mapsto \ln \mathbb{E} ~e^{ ( \mu^\ast - \frac{1}{x}Ê) X }$ is regularly varying, satisfying some differentiability conditions. In this article we shall consider the set ${\bf A_n (T, M; \gamma)}$ of {\it continuous functions on $ [0,T] \times [M, \infty[$ which are $n$-times continuously differentiable with respect to $x$, for some $ T, ~ÊM > 0$ and $n \in \mathbb{N}$, such that  for every $ k \le n$ , $ x^{ - \gamma + k}  \frac{ \partial f }{ \partial x^k} (t, x) $ is bounded on $ [0,T] \times [M, \infty[$}.  We shall denote $ {\bf A_n (M; \gamma)} $ the set of functions in $A_n (T,M; \gamma)$ which are {\it constant with respect to the variable $t$}.  We shall also denote $ {\bf \bar{A}_n (T, M; \gamma) }$ (resp. $\bar{A}_n (M; \gamma)$) the set of function $f$ such that $ f \in A_n (T, M; \gamma)$ and $ \frac{1}{f} \in   A_n (T, M; -\gamma)$ (resp.  $ f \in A_n (M; \gamma)$  and  $ \frac{1}{f} \in   A_n ( M; -\gamma$)).  The sets $ A_n$ share similar properties with the set of functions with regular variations and smooth variations used in \cite{Aly13}. In particular, it is well known that  if  $ f$ is regularly varying with index $ \alpha > 0$ in $ [M, + \infty[$ then $ f \in A_0 (M; \alpha + \epsilon) $, whenever $ \epsilon > 0$. %Also

 Next, we present the Tauberian results of this paper. The main result is a new version of (\ref{lim_sup}) and (\ref{lim_inf})  which applies to a random variable $X$ satisfying the following assumptions:

 \begin{assumption}\label{hyp_laregdev}
There exist $ \mu^\ast > 0$,  $ \alpha_1 $ and $ \alpha_2$, with $ 0< \alpha_1 \le \alpha_2$ and $ \alpha_2 ( 1 + \frac{1}{1 + \alpha_2} ) < 2 \alpha_1$, and $ c_1, c_2 ~Ê\in \mathbb{R}_+$ such that for every $ 0 \le \mu < \mu^\ast$, 
\begin{equation}
\frac{c_1}{ (\mu^\ast - \mu)^{\alpha_1} } \le  \ln \mathbb{E} \; e^{\mu X} \le \frac{c_2}{ (\mu^\ast - \mu)^{\alpha_2 }} 
\end{equation}
\end{assumption}

The next Lemma will be crucial for the proof of the main Tauberian Lemma~\ref{lemma-tauberian} and  Theorem~\ref{theorem-tauberian}.

\begin{lemma}\label{lemma-bounds}
Under Assumption~\ref{hyp_laregdev} we have $ \Lambda : p \mapsto \ln ~\mathbb{E}~e^{ p X} 1_{ X > 0}$ is convex. In particular $ \Lambda^\ast (x) = p^\ast (x) x - \Lambda (p^\ast(x))$, where $p^\ast (x)$ is the unique solution to $ x = \Lambda' (p^\ast (x))$. Furthermore, 
\begin{equation}
\mu^\ast - \tilde{c_2}x^\frac{-1}{\alpha_2 + 1} \le p^\ast (x) \le \mu^\ast - (\frac{c_1}{\tilde{c}_2}) x^\frac{-\alpha_2/\alpha_1}{\alpha_2 + 1}.
\end{equation}
and
 \begin{equation}
 {p^\ast}' (x) \in \left[  
 m_1 x^{ - \frac{\alpha_2}{\alpha_1}( 1 +  \frac{1 }{1 + \alpha2} )Ê},
 m_2 x^{ - \frac{\alpha_1}{\alpha_2} (1 +  \frac{1 }{1 + \alpha2} )Ê},
 \right],
 \end{equation}
 where $ \tilde{c}_2 = ( c_2 \alpha_2)^\frac{1}{1 + \alpha_2} (1 + \alpha_2)/\alpha_2$, and $ m_1, m_2 > 0$.
\end{lemma}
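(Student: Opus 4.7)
The plan is to prove the three claims of the lemma in turn, each relying on the basic convexity of $\Lambda$ and the two-sided power bounds of Assumption \ref{hyp_laregdev}.

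For the first statement, convexity of $\Lambda(p)=\ln \mathbb{E}\,e^{pX}\mathbf{1}_{X>0}$ on $[0,\mu^\ast)$ is immediate from H\"older's inequality applied to $\mathbb{E}\,e^{pX}\mathbf{1}_{X>0}$, which is log-convex. Since the lower bound $c_1/(\mu^\ast-\mu)^{\alpha_1}$ forces $\Lambda(\mu)\uparrow\infty$ as $\mu\uparrow\mu^\ast$, the derivative $\Lambda'$ is strictly increasing on $(0,\mu^\ast)$ and blows up at $\mu^\ast$. For every sufficiently large $x$ the equation $\Lambda'(p)=x$ then admits a unique solution $p^\ast(x)\in(0,\mu^\ast)$, and the supremum defining $\Lambda^\ast(x)$ is attained there, giving $\Lambda^\ast(x)=p^\ast(x)x-\Lambda(p^\ast(x))$.

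For the two-sided bound on $p^\ast(x)$, the plan is to bound $\Lambda'$ from above and below in powers of $(\mu^\ast-\mu)$ and then invert. For the upper bound on $\Lambda'$: by convexity, for any $\delta\in(0,1)$,
\begin{equation*}
\Lambda'(\mu)(1-\delta)(\mu^\ast-\mu)\;\le\;\Lambda\bigl(\mu+(1-\delta)(\mu^\ast-\mu)\bigr)-\Lambda(\mu)\;\le\;\frac{c_2}{\delta^{\alpha_2}(\mu^\ast-\mu)^{\alpha_2}}.
\end{equation*}
Optimising in $\delta$ gives the minimiser $\delta=\alpha_2/(1+\alpha_2)$ and the bound $\Lambda'(\mu)\le c_2(1+\alpha_2)^{\alpha_2+1}/(\alpha_2^{\alpha_2}(\mu^\ast-\mu)^{\alpha_2+1})$; inserting $\mu=p^\ast(x)$ and solving for $\mu^\ast-p^\ast(x)$ produces the stated $\tilde c_2 x^{-1/(\alpha_2+1)}$ bound. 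For the lower bound on $\Lambda'$, choose $\mu_0<\mu$ so that the upper estimate $c_2/(\mu^\ast-\mu_0)^{\alpha_2}$ for $\Lambda(\mu_0)$ is at most half of the lower estimate $c_1/(\mu^\ast-\mu)^{\alpha_1}$ for $\Lambda(\mu)$; this forces $\mu^\ast-\mu_0\sim(\mu^\ast-\mu)^{\alpha_1/\alpha_2}$, hence $\mu-\mu_0\sim(\mu^\ast-\mu)^{\alpha_1/\alpha_2}$ for $\mu$ near $\mu^\ast$. Convexity then yields $\Lambda'(\mu)\ge(\Lambda(\mu)-\Lambda(\mu_0))/(\mu-\mu_0)\gtrsim(\mu^\ast-\mu)^{-\alpha_1(\alpha_2+1)/\alpha_2}$, and inversion at $x=\Lambda'(p^\ast(x))$ delivers $\mu^\ast-p^\ast(x)\gtrsim x^{-\alpha_2/(\alpha_1(\alpha_2+1))}$ with the claimed constant $c_1/\tilde c_2$.

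The derivative bound uses ${p^\ast}'(x)=1/\Lambda''(p^\ast(x))$, so the task reduces to bounding $\Lambda''$ pointwise at $p^\ast(x)$. The approach is to combine the two-sided second-order finite-difference inequalities available for a smooth convex function,
\begin{equation*}
\frac{\Lambda'(\mu+h)-\Lambda'(\mu)}{h}\;\le\;\Lambda''(\xi)\;\le\;\frac{\Lambda'(\mu+H)-\Lambda'(\mu-H)}{2H},
\end{equation*}
with the $\Lambda'$ bounds established in the previous step, choosing the spacings $h,H$ of the same order as $\mu^\ast-p^\ast(x)$. Substituting the two-sided estimates on $\mu^\ast-p^\ast(x)$ from part (ii) then gives the advertised exponents $(\alpha_2/\alpha_1)(1+1/(1+\alpha_2))$ and $(\alpha_1/\alpha_2)(1+1/(1+\alpha_2))$, the asymmetry arising from crossing an upper bound on $\Lambda$ with a lower bound on $\mu^\ast-p^\ast(x)$ in one direction and vice versa in the other. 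This last step is the main technical obstacle: extracting pointwise power bounds on $\Lambda''$ from only pointwise power bounds on $\Lambda$ requires carefully selecting the finite-difference spacings and invoking the structural constraint $\alpha_2(1+1/(1+\alpha_2))<2\alpha_1$ of Assumption \ref{hyp_laregdev} to guarantee the two exponents land in the correct order.
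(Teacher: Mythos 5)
Your treatment of the convexity and the bounds on $p^\ast(x)$ is correct and arrives at the same exponents (and the same constant $\tilde c_2$), but you use a genuinely different route from the paper. You bound $\Lambda'$ directly by convexity finite‑differences with an optimised spacing $\delta=\alpha_2/(1+\alpha_2)$, then invert. The paper instead pushes the two power bounds on $\Lambda$ through the Legendre transform to get two‑sided bounds $\mu^\ast x-\tilde c_2 x^{\alpha_2/(\alpha_2+1)}\le\Lambda^\ast(x)\le\mu^\ast x-\tilde c_1 x^{\alpha_1/(\alpha_1+1)}$, reads off the upper bound on $\mu^\ast-p^\ast(x)$ from $\Lambda\ge 0$, and gets the lower bound by contradiction using the lower envelope $c_1/(\mu^\ast-\mu)^{\alpha_1}$ of $\Lambda$. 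Both arguments are sound for this part; yours is arguably more self‑contained since it never passes through $\Lambda^\ast$.

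For the derivative bound on ${p^\ast}'(x)$ your plan does not close, and the obstacle you name at the end is in fact fatal rather than merely technical. The two finite‑difference inequalities you write,
\begin{equation*}
\frac{\Lambda'(\mu+h)-\Lambda'(\mu)}{h}\le\Lambda''(\xi)\le\frac{\Lambda'(\mu+H)-\Lambda'(\mu-H)}{2H},
\end{equation*}
hold only for some intermediate point $\xi$, and no choice of spacings $h,H$ will force $\xi=p^\ast(x)$. What a difference quotient of $\Lambda'$ controls is the \emph{average} of $\Lambda''$ over an interval, not its pointwise value; since $\Lambda''$ need not be monotone, an averaged bound cannot be converted into the required pointwise bound at $p^\ast(x)$. (The assumption $\alpha_2(1+\tfrac1{1+\alpha_2})<2\alpha_1$ only ensures the two exponents land in the right order and $x^2{p^\ast}'(x)\to\infty$; it does nothing to repair the intermediate-point problem.) The paper takes a different route here: it iterates the Legendre‑transform argument one level up. Define $h(x):=\sup_{p\in[0,\mu^\ast)}\{xp-\Lambda'(p)\}$; any interior maximiser $p(x)$ of this problem satisfies the stationarity condition $\Lambda''(p(x))=x$, and because one now has two‑sided power bounds on $\Lambda'$ (the $\beta_1,\beta_2$ bounds you also derived), the same localisation argument that pinned $p^\ast(x)$ between $p_1(x)$ and $p_2(x)$ pins $p(x)$ between explicit brackets. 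Combined with the sign observation that, for $x$ large, $\Lambda''<x$ to the left of the level set $\{\Lambda''=x\}$ and $\Lambda''>x$ to the right of it (the latter since otherwise $\Lambda'(\mu^\ast^-)$ would be finite), one extracts the two‑sided power bound on $\Lambda''(p)$, and hence ${p^\ast}'(x)=1/\Lambda''(p^\ast(x))$. You should replace your finite‑difference step by this second Legendre transform in order to obtain genuinely pointwise control of $\Lambda''$ at $p^\ast(x)$.
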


\begin{lemma}\label{lemma-tauberian}
Let Assumption~\ref{hyp_laregdev} hold for some random variable $X$. Consider the function  $ \chi_x(.)$ defined by (\ref{psi_x_z}), for $x$ sufficiently large and let $g \in A_0 (0; \gamma)$, for some $ \gamma \in \mathbb{R}$. Then we have
\begin{equation}\label{taub_res}
 \int_{ x^{\beta-1} }^\infty g (xz) ~ e^{  \psi_x (z)} d z \sim_{ x \to \infty} \frac{ g(x) \sqrt{ 2 \pi}}{ \sqrt{ 2  x^2  {p^\ast}'(x)}   } ,
\end{equation}
where $ \sim$ refers to the equivalence between two functions defined as $ f_1 \sim f_2$  if $~~\lim_{x \to \infty} \frac{f_1 (x)}{f_2  (x) } = 1 $. Furthermore, if $x \mapsto \Lambda (\mu^\ast - \frac{1}{x}Ê)\in \bar{A}_3 (M; \alpha)$, for some positive $ \alpha$  then for every continuous function $g$ such that $ \lim_{x \to \infty} \frac{x g'(x)}{g(x)} = \gamma  $, we have
\begin{equation}\label{taub_res-refined}
 \int_{ x^{\beta-1} }^\infty g (xz) ~ e^{  \psi_x (z)} d z = \frac{ g(x) \sqrt{ \pi}}{ \sqrt{ 2  x^2  {p^\ast}'(x)}   } \left( 2 + \frac{ \gamma^2 + \frac{\gamma}{\alpha + 1}  + c_\alpha  }{x^2  {p^\ast}'(x) }  +  o ( \frac{1}{x^2 {p^\ast}'(x) }) \right),
\end{equation} 
where
 \begin{eqnarray*}
c_\alpha &=&  - \frac{1}{4} (1 +  \frac{1}{\alpha + 1}) (2 +  \frac{1}{\alpha + 1}) + \frac{5}{12} ( 1 +  \frac{1}{\alpha + 1})^2.
\end{eqnarray*}
\end{lemma}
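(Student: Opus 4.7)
The plan is to reduce the integral to a Gaussian integral via the standard Laplace method. First I would change variables $y=xz$, rewriting the integral as $x^{-1}\int_{x^\beta}^\infty g(y) e^{\chi_x(y/x)}\,dy$. Using $\Lambda^\ast(y)=p^\ast(y)y-\Lambda(p^\ast(y))$ one checks the clean Legendre identity
\begin{equation*}
\chi_x(y/x)=p^\ast(x)(y-x)-\bigl(\Lambda^\ast(y)-\Lambda^\ast(x)\bigr),
\end{equation*}
and since ${\Lambda^\ast}'=p^\ast$, Taylor expansion around $y=x$ yields
\begin{equation*}
\chi_x(y/x)=-\tfrac{1}{2}{p^\ast}'(x)(y-x)^{2}-\tfrac{1}{6}{p^\ast}''(x)(y-x)^{3}-\tfrac{1}{24}{p^\ast}'''(x)(y-x)^{4}-\cdots
\end{equation*}
so the maximum of $\chi_x$ is at $y=x$ and the natural Gaussian scale is $1/\sqrt{{p^\ast}'(x)}$.

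Next I would substitute $u=(y-x)\sqrt{{p^\ast}'(x)}$ and verify that the rescaled lower endpoint $(x^\beta-x)\sqrt{{p^\ast}'(x)}$ tends to $-\infty$. This is exactly where Lemma 2.1 and the assumption $\alpha_{2}(1+\tfrac{1}{1+\alpha_{2}})<2\alpha_{1}$ enter: they give $x^{2}{p^\ast}'(x)\to\infty$, so the endpoint goes to $-\infty$, while the concavity of $\chi_x$ (equivalent to convexity of $\Lambda^\ast$) provides a uniform Gaussian-type tail bound for $|u|$ large, justifying by dominated convergence the replacement of the integration range by $\mathbb{R}$. For the first statement \trif{taub_res}, since $g\in A_{0}(0;\gamma)$ one has $g(x+u/\sqrt{{p^\ast}'(x)})/g(x)\to 1$ pointwise in $u$ with a polynomial bound, and together with the convergence $\chi_x(y/x)\to -u^{2}/2$ this gives $\int e^{-u^{2}/2}du=\sqrt{2\pi}$ and hence the claimed equivalent.

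For the refined expansion \trif{taub_res-refined}, I would write everything in powers of the small parameter $\sigma:=1/(x^{2}{p^\ast}'(x))$. The hypothesis $\Lambda(\mu^\ast-1/x)\in\bar A_{3}(M;\alpha)$, combined with the Legendre relation $x=\Lambda'(p^\ast(x))$, allows me to show
\begin{equation*}
\frac{{p^\ast}''(x)}{({p^\ast}'(x))^{3/2}}\sim -\bigl(1+\tfrac{1}{\alpha+1}\bigr)\sqrt{\sigma},\qquad \frac{{p^\ast}'''(x)}{({p^\ast}'(x))^{2}}\sim \bigl(1+\tfrac{1}{\alpha+1}\bigr)\bigl(2+\tfrac{1}{\alpha+1}\bigr)\sigma,
\end{equation*}
so the cubic coefficient $A=-{p^\ast}''(x)/(6({p^\ast}'(x))^{3/2})$ is $O(\sqrt\sigma)$ and the quartic coefficient $B=-{p^\ast}'''(x)/(24({p^\ast}'(x))^{2})$ is $O(\sigma)$. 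Then I would expand $e^{\chi_x(y/x)+u^{2}/2}=1+Au^{3}+Bu^{4}+\tfrac{A^{2}}{2}u^{6}+o(\sigma)$ and $g(x+u/\sqrt{{p^\ast}'(x)})/g(x)=1+\gamma\sqrt{\sigma}\,u+\tfrac{\gamma(\gamma-1)}{2}\sigma u^{2}+o(\sigma)$ using $xg'/g\to\gamma$ (and the corresponding limit for $g''$ forced by the assumed class), multiply the two series, integrate against $e^{-u^{2}/2}$ using the Gaussian moments $\int u^{2k}e^{-u^{2}/2}du=(2k-1)!!\sqrt{2\pi}$. Odd powers cancel; the surviving $O(\sigma)$ contributions are $3B$ from $u^{4}$, $\tfrac{15}{2}A^{2}$ from $u^{6}$, $3\gamma A\sqrt{\sigma}$ from the cross term $\gamma\sqrt{\sigma}u\cdot Au^{3}$, and $\tfrac{\gamma(\gamma-1)}{2}\sigma$ from $g''$. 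Summing and using the explicit formulae for $A$ and $B$ collapses the coefficient to $\tfrac{1}{2}(\gamma^{2}+\gamma/(\alpha+1)+c_\alpha)$ with the stated $c_\alpha$.

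The main obstacle is the rigor of Step 2: extending the integration to $\mathbb{R}$ and controlling the remainder uniformly in $x$. The region $|u|\le x^{\tfrac{1}{4}}$ (say) is handled by the Taylor expansions above with explicit error bounds coming from the $\bar A_{3}$ class of $\Lambda(\mu^\ast-1/x)$; the complementary tail region must be shown to contribute $o(\sigma/\sqrt{x^{2}{p^\ast}'(x)})$, which will again follow from the upper bound on $\Lambda$ in Assumption 2.1 via a Gaussian envelope argument. Once these tail estimates are in place the refined asymptotic is just a bookkeeping matter.
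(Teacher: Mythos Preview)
Your proposal is correct and follows essentially the same Laplace-method strategy as the paper: identify the maximum of $\chi_x$ at $z=1$ via the Legendre identity $\chi_x(z)=p^\ast(x)(zx-x)-(\Lambda^\ast(zx)-\Lambda^\ast(x))$, obtain the quadratic behaviour $\chi_x(z)\sim-\tfrac{1}{2}x^{2}{p^\ast}'(x)(z-1)^{2}$, localize, and reduce to a Gaussian integral using the bounds on ${p^\ast}'$ from Lemma~\ref{lemma-bounds}.

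The implementation differs slightly. The paper localizes with an explicit window $h(x)=x^{-1+\gamma/4}/\sqrt{{p^\ast}'(x)}$ and kills the tails by the tangent-line bound $e^{\chi_x(z)}\le e^{\chi_x(1+h)}e^{-x[p^\ast((1+h)x)-p^\ast(x)](z-h)}$ for $z>1+h$, together with $\chi_x(1\pm h)\le -\tfrac12 x^{\gamma/2}$; then it uses the exact substitution $u=\sqrt{-2\chi_x(1\pm z)}$ rather than your linear one. Your direct rescaling $u=(y-x)\sqrt{{p^\ast}'(x)}$ is the more textbook route and makes the refined expansion \trif{taub_res-refined} cleaner: the paper only says ``apply first-order Taylor expansion to ${p^\ast}'$'' whereas your Gaussian-moment bookkeeping with $A,B$ and the cross terms actually produces $c_\alpha$. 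The one place where the paper is more explicit than you is precisely the tail control you flag as the main obstacle; the monotonicity-plus-tangent argument above is what you would need to fill in there.
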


\begin{theorem}\label{theorem-tauberian}
Under Assumption~\ref{hyp_laregdev}  we have
\begin{equation}\label{lim_sup2}
\limsup_{x \to \infty} ÊÊ\left(   \ln \mathbb{P} (X \geq x) + \Lambda^\ast (x)  +\ln ( \frac{ p^\ast(x) \sqrt{2 \pi} }{  \sqrt{ {p^\ast}' (x) }  } ) \right) \geq 0, 
\end{equation}
and 
\begin{equation}\label{lim_inf2}
\liminf_{x \to \infty} ~ÊÊ\left(   \ln \mathbb{P} (X \geq x) + \Lambda^\ast (x)  +\ln ( \frac{ p^\ast(x) \sqrt{2 \pi} }{  \sqrt{ {p^\ast}' (x) }  } ) \right) \le 0. Ê
\end{equation}
In particular, if   "$\limsup$" equals "$\liminf$"  then we have 
\begin{equation}\label{expans_cdf}
\mathbb{P} (X \geq x) ~ \sim ~ e^{ - \Lambda^\ast (x)}   \frac{ \sqrt{ { p^\ast}' (x) } }{ p^\ast (x) \sqrt{2 \pi}} .
\end{equation}
Furthermore, if $x \mapsto \Lambda (\mu^\ast - \frac{1}{x}Ê) \in \bar{A}_3 (M; \alpha)$, then we have (when "$\limsup$" equals "$\liminf$")
\begin{equation}\label{expans_cdf-2}
\mathbb{P} (X \geq x) = e^{ - \Lambda^\ast (x)}  \left( \frac{ \sqrt{ { p^\ast}' (x) } }{ p^\ast (x) \sqrt{2 \pi}} - \frac{ 2 + \frac{\alpha}{ (\alpha + 1)^2 } }{  24 \mu^\ast  \sqrt{2 \pi} } ~\frac{1}{x^2 \sqrt{ {p^\ast}'(x) }  }    + o(     \frac{1}{x^2  \sqrt{ {p^\ast}' (x) } }    )   \right).
\end{equation}
\end{theorem}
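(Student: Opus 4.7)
My plan is to substitute the integral representation (\ref{eq_mgf}) into itself via the saddle expansion of Lemma~\ref{lemma-tauberian}. Setting
\[
U(x):=\ln\mathbb{P}(X\geq x)+\Lambda^\ast(x)+\ln\!\bigl(p^\ast(x)\sqrt{2\pi}/\sqrt{{p^\ast}'(x)}\bigr),
\]
the claim becomes $\limsup U\geq 0\geq \liminf U$. Fix $\beta\in(0,\alpha_1/(\alpha_2+1))$; Assumption~\ref{hyp_laregdev} and Lemma~\ref{lemma-bounds} give $\Lambda(p^\ast(x))\gtrsim x^{\alpha_1/(\alpha_2+1)}$, so $\mathbb{E}[e^{p^\ast(x)(x^\beta\wedge X)}]\leq e^{\mu^\ast x^\beta}=o(e^{\Lambda(p^\ast(x))})$. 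Substituting $\mathbb{P}(X>z)=e^{U(z)-\Lambda^\ast(z)}\sqrt{{p^\ast}'(z)}/(p^\ast(z)\sqrt{2\pi})$ in (\ref{eq_mgf}), changing variable $z=xy$, and using the algebraic identity $p^\ast(x)\,xy-\Lambda^\ast(xy)=\chi_x(y)+\Lambda(p^\ast(x))$ rewrites the identity as
\[
1=o(1)+\frac{p^\ast(x)\,x}{\sqrt{2\pi}}\int_{x^{\beta-1}}^\infty e^{\chi_x(y)+U(xy)}\,g(xy)\,dy,\qquad g(z):=\frac{\sqrt{{p^\ast}'(z)}}{p^\ast(z)},
\]
where Lemma~\ref{lemma-bounds} makes $g$ polynomial-like and thus an admissible amplitude in Lemma~\ref{lemma-tauberian}.

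For the limsup inequality I argue by contradiction. Assume $\limsup U\leq-\eta$ with $\eta>0$, so $U(z)\leq-\eta/2$ for $z\geq z_0$. Split the inner integral at $y=z_0/x$: on $y\in[x^{\beta-1},z_0/x]$, i.e.\ $z\in[x^\beta,z_0]$, the crude bound $\mathbb{P}(X>z)\leq 1$ makes the contribution $\leq e^{\mu^\ast z_0}=o(e^{\Lambda(p^\ast(x))})$; on $y\geq z_0/x$, the bound $e^{U(xy)}\leq e^{-\eta/2}$ and formula (\ref{taub_res}) yield the asymptotic $e^{-\eta/2}(1+o(1))$. Plugging back produces $1\leq o(1)+e^{-\eta/2}$, impossible. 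Hence $\limsup U\geq 0$. The liminf inequality is symmetric: assuming $\liminf U\geq \eta>0$ and discarding the nonnegative small-$y$ tail gives $1\geq e^{\eta/2}(1+o(1))$, equally impossible. When the two limits coincide, the common value must be $0$, which unwinds to (\ref{expans_cdf}).

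For (\ref{expans_cdf-2}), the hypothesis $\Lambda(\mu^\ast-1/x)\in\bar A_3(M;\alpha)$ promotes Lemma~\ref{lemma-bounds} to full Taylor asymptotics ${p^\ast}'(x)\sim K x^{-(\alpha+2)/(\alpha+1)}$ and places $g$ in $\bar A_3$ with $\gamma:=\lim_{x\to\infty}xg'(x)/g(x)=-\tfrac12(1+1/(\alpha+1))$, so the refined formula (\ref{taub_res-refined}) applies. Writing $\mathbb{P}(X\geq z)=e^{-\Lambda^\ast(z)}[\sqrt{{p^\ast}'(z)}/(p^\ast(z)\sqrt{2\pi})](1+\Delta(z))$ and inserting into (\ref{eq_mgf}), the leading order cancels as above, while the second order gives
\[
\frac{\gamma^2+\gamma/(\alpha+1)+c_\alpha}{2\,x^2{p^\ast}'(x)}+(\text{$\Delta$-contribution})=o\!\bigl(1/(x^2{p^\ast}'(x))\bigr).
\]
Trying the ansatz $\Delta(z)=-B/(z^2{p^\ast}'(z))$ and applying the leading formula (\ref{taub_res}) to the amplitude $g\cdot\Delta$ identifies the $\Delta$-contribution as exactly $-B/(x^2{p^\ast}'(x))$; matching orders forces $B=(\gamma^2+\gamma/(\alpha+1)+c_\alpha)/2$. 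An elementary simplification, using the stated $c_\alpha$ and $\gamma$, collapses the numerator to $(\alpha+2)(2\alpha+1)/(12(\alpha+1)^2)=(2+\alpha/(\alpha+1)^2)/12$, so $B=(2+\alpha/(\alpha+1)^2)/24$, and replacing $p^\ast(z)$ by $\mu^\ast$ in the prefactor (justified by Lemma~\ref{lemma-bounds}) yields (\ref{expans_cdf-2}); a final limsup/liminf bootstrap at this refined level, identical in structure to the first part of the proof, upgrades the identification of $B$ to the stated equality.

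The main obstacle I anticipate is verifying that $g$ and the auxiliary amplitude $1/(z^2\sqrt{{p^\ast}'(z)})$ sit in the function classes required by Lemma~\ref{lemma-tauberian}, and controlling the saddle-point remainder uniformly in $x$ so that both the small-$y$ truncation in the first part and the $\Delta$-ansatz bootstrap in the second part are rigorous; the algebraic simplification that collapses $\gamma^2+\gamma/(\alpha+1)+c_\alpha$ into $(2+\alpha/(\alpha+1)^2)/12$ is routine but essential for recognizing the stated form of the correction.
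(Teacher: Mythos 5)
Your proof follows essentially the same route as the paper: both feed the hypothetical tail envelope $\mathbb{P}(X\geq z)\asymp e^{-\Lambda^\ast(z)}\sqrt{{p^\ast}'(z)}/(p^\ast(z)\sqrt{2\pi})$ into the integral representation (\ref{eq_mgf}), evaluate the resulting integral via the saddle Lemma~\ref{lemma-tauberian} with the amplitude $g(z)=\sqrt{{p^\ast}'(z)}/p^\ast(z)$, and derive both inequalities by contradiction against the trivial $x^\beta\ll\Lambda(p^\ast(x))$ bound on the truncated expectation. The only cosmetic differences are your normalization $U(x)$ (dividing through by $e^{\Lambda(p^\ast(x))}$ to obtain the identity $1=o(1)+\cdots$) and your explicit ansatz $\Delta(z)=-B/(z^2{p^\ast}'(z))$ for the refined term, which make concrete the matching step the paper only sketches; your algebraic reduction of $\gamma^2+\gamma/(\alpha+1)+c_\alpha$, with $\gamma=-\tfrac12(1+\tfrac1{\alpha+1})$, to $(2+\alpha/(\alpha+1)^2)/12$ checks out.
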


\begin{proposition}\label{proposition-tauberian}
Let $X$ and $g$ satisfy the assumptions of Theorem~\ref{theorem-tauberian} and assume that "$\limsup$" and "$\liminf$" in (\ref{lim_inf2}) and (\ref{lim_sup2}) are equal.  ÊWe have, 
\begin{equation}
\frac{ \mathbb{E} ( g(X) e^{( \mu^\ast - \frac{1}{x} ) X} )}{ \mathbb{E} (  e^{( \mu^\ast - \frac{1}{x} ) X} ) } ~ \sim_{ x \to \infty}~ g( \Lambda' (\mu^\ast - \frac{1}{x}) ) + \frac{1}{\mu^\ast - \frac{1}{x} } g' ( \Lambda' (\mu^\ast - \frac{1}{x}) )Ê .   
\end{equation}
If in addition   $x \mapsto \Lambda (\mu^\ast - \frac{1}{x}Ê)\in \bar{A}_3 (M; \alpha)$, then we have  for  any $\mathcal{C}^1-$function $g$ such that $ \lim_{ x \to \infty } \frac{x g' (x) }{ g (x)} = \gamma \in \mathbb{R} $,
\begin{equation}\label{tauberian-pososition-refined}
\frac{ \mathbb{E} ( g(X) e^{( \mu^\ast - \frac{1}{x} ) X} )}{ \mathbb{E} (  e^{( \mu^\ast - \frac{1}{x} ) X} ) } ~ \sim_{ x \to \infty}~ g( \Lambda' (\mu^\ast - \frac{1}{x}) )   \left( 1 + (\gamma^2 - \gamma) \frac{ \Lambda'' (\mu^\ast - \frac{1}{x}  )  }{  2 { \Lambda'}^2 ( \mu^\ast - \frac{1}{x}) } + o( \frac{ \Lambda'' (\mu^\ast - \frac{1}{x}  )  }{  2 { \Lambda'}^2 ( \mu^\ast - \frac{1}{x}) } Ê )   \right) .
\end{equation}
\end{proposition}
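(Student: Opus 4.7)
The plan is to reduce to the setting of Theorem~\ref{theorem-tauberian} and Lemma~\ref{lemma-tauberian} by writing the numerator as a boundary term plus an integral against the tail probability. For any cutoff $c > 0$ and any $\mathcal{C}^1$-function $g$, the pathwise identity
\[
g(X)\,e^{\mu X} = g(c\wedge X)\,e^{\mu(c\wedge X)} + \int_c^\infty \bigl(g'(z)+\mu g(z)\bigr)\,e^{\mu z}\,\mathbf{1}_{\{X\ge z\}}\,dz,
\]
obtained by integrating $\frac{d}{dz}\bigl(g(z)e^{\mu z}\bigr)$ from $c\wedge X$ to $X$, gives upon taking expectations
\[
\mathbb{E}\bigl(g(X)e^{\mu X}\bigr) = \mathbb{E}\bigl(g(c\wedge X)e^{\mu(c\wedge X)}\bigr) + \int_c^\infty\bigl(g'(z)+\mu g(z)\bigr)\,e^{\mu z}\,\mathbb{P}(X\ge z)\,dz.
\]
The boundary term is bounded uniformly in $\mu<\mu^*$, hence $o(e^{\Lambda(\mu)})$, so the analysis reduces to the remaining integral.

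Setting $\mu=\mu^*-1/x$ and $\nu=\Lambda'(\mu)$ (so that $p^*(\nu)=\mu$ by Legendre duality), I would then invoke the tail equivalence from Theorem~\ref{theorem-tauberian}, available under the hypothesis $\limsup=\liminf$, to replace $\mathbb{P}(X\ge z)$ by $e^{-\Lambda^*(z)}\sqrt{{p^*}'(z)}/(p^*(z)\sqrt{2\pi})$. The Legendre identity $\mu z - \Lambda^*(z)=(p^*(\nu)-p^*(z))\,z+\Lambda(p^*(z))=\chi_\nu(z/\nu)+\Lambda(\mu)$, with $\chi_\nu$ as in (\ref{psi_x_z}), followed by the substitution $z=\nu y$, turns the integral into
\[
\frac{e^{\Lambda(\mu)}\,\nu}{\sqrt{2\pi}}\int_{c/\nu}^\infty\bigl(g'(\nu y)+\mu g(\nu y)\bigr)\,\frac{\sqrt{{p^*}'(\nu y)}}{p^*(\nu y)}\,e^{\chi_\nu(y)}\,dy,
\]
which is precisely the form handled by Lemma~\ref{lemma-tauberian} applied to the two composite functions $h_1(z):=g(z)\sqrt{{p^*}'(z)}/p^*(z)$ and $h_2(z):=g'(z)\sqrt{{p^*}'(z)}/p^*(z)$.

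For the leading-order statement I would apply the first assertion of Lemma~\ref{lemma-tauberian} to $h_1$ and $h_2$ separately, use $p^*(\nu)=\mu$, and divide by $\mathbb{E}(e^{\mu X})=e^{\Lambda(\mu)}$; the two contributions then combine into $g(\nu)+g'(\nu)/\mu$, which is the first claim. For the refined expansion I would substitute the sharper asymptotic (\ref{taub_res-refined}) in place of (\ref{taub_res}) and collect the sub-leading corrections from both $h_1$ and $h_2$. Using the scaling bounds of Lemma~\ref{lemma-bounds}, one checks that the leading contribution from $h_2$, namely $g'(\nu)/\mu$, is of strictly smaller order than the Laplace-type correction $1/(\nu^2{p^*}'(\nu))=\Lambda''(\mu)/\Lambda'(\mu)^2$ arising from $h_1$, so it is absorbed into the $o(\cdot)$ remainder; after simplification of the constants $\gamma^2+\gamma/(\alpha+1)+c_\alpha$ coming from (\ref{taub_res-refined}), the surviving coefficient of $\Lambda''(\mu)/\Lambda'(\mu)^2$ collapses to $(\gamma^2-\gamma)/2$.

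The main obstacle I anticipate is two-fold: first, verifying that the composite functions $h_1,h_2$ lie in the regularity class $A_0(0;\gamma')$ required to invoke Lemma~\ref{lemma-tauberian}, a step that depends on the quantitative estimates for $p^*$ and ${p^*}'$ in Lemma~\ref{lemma-bounds} (and on the hypothesis $xg'(x)/g(x)\to\gamma$ for the refined part); and second, the bookkeeping in the refined expansion, which must both demonstrate that the $g'(\nu)/\mu$ term is genuinely of lower order than $\Lambda''(\mu)/\Lambda'(\mu)^2$ under Assumption~\ref{hyp_laregdev}, and trace how the various sub-leading constants from (\ref{taub_res-refined}), combined with the tilt factor $\sqrt{{p^*}'}/p^*$, cancel out to leave the clean coefficient $(\gamma^2-\gamma)$.
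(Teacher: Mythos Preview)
Your proposal is correct and follows essentially the same route as the paper: the same integration-by-parts representation of $g(X)e^{\mu X}$, the same substitution $\nu=\Lambda'(\mu)$ (the paper calls it $Z$), the same replacement of $\mathbb{P}(X\ge z)$ via Theorem~\ref{theorem-tauberian}, and the same reduction to Lemma~\ref{lemma-tauberian} applied to the composite weights $g\cdot\sqrt{{p^*}'}/p^*$ and $g'\cdot\sqrt{{p^*}'}/p^*$. One small point: you should let the cutoff $c$ grow with $x$ (the paper takes $c=Z^\beta$ with $Z^\beta\ll\Lambda(p^*(Z))$) rather than keep it fixed, since the tail equivalence from Theorem~\ref{theorem-tauberian} is only asymptotic and you need it on the whole range of integration; with that choice the boundary term is still $o(e^{\Lambda(\mu)})$ and your argument goes through. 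Your bookkeeping for the refined coefficient is in fact more explicit than the paper's own proof, which simply defers to ``the refined statement of Theorem~\ref{theorem-tauberian}''; the cancellation you anticipate---that the index shift $\delta=-(\alpha+2)/(2(\alpha+1))$ from $\sqrt{{p^*}'}/p^*$ turns $\gamma^2+2\gamma\delta+\gamma/(\alpha+1)$ into $\gamma^2-\gamma$ when one divides by the $g\equiv1$ case---is exactly what produces the stated constant.
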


\section{One-dimensional stochastic differential equations}\label{sec3}
\noindent The Tauberian result obtained in \cite{Aly13} applies to any random variable whose moment generating function is known and regularly varying near its critical moment. Theorem~\ref{theorem-tauberian} relaxes the regular varying assumption to being bounded from below and above by two power functions. This obviously is a large improvement on the results of \cite{Aly13}, but the main advantage of Theorem~\ref{theorem-tauberian} is that we do not need to have the explicit expression of the MGF to link it the cumulative distribution; instead we only need the knowledge  of the two bounds. In what follows we  consider a stochastic differential equation and use the comparison theorem to "squeeze" it between two SDEs whose MGF are known. Then by applying Theorem~\ref{theorem-tauberian} and Proposition~\ref{proposition-tauberian} we reduce the problem of obtaining the tail of  distribution to solving a nonlinear partial differential equation. We then prove the existence and the uniqueness of the nonlinear PDE using the inverse function theorem in a Banach space that we shall define later in this article.

 We consider a general continuous stochastic differential equation of dimension one 
\begin{equation}
d X_t = B (X_t) d t + \sigma(X_t) d W_t.
\end{equation} 
We will focus on stochastic differential equations which admit a positive solution (see Remark~\ref{positive-negative-SDE} below for the discussion about the general case of general stochastic differential equation). We start by considering the easy case where the MGF is known; we consider the Cox-Ingersoll-Ross process (CIR). In the following step we consider the case of $ \sigma(x) = \sqrt{x}$ and a general form for $B(.)$. In this step we shall use the CIR processes as bounds for our SDE. In the third step, we consider the general setting for $ \sigma(.)$ and $B(.)$.

\subsection{Cox-Ingersoll-Ross (CIR) process} The case of linear drift term:  $ b (x) = a - bx$ and  square-root diffusion: $ \sigma(x) = \sigma \sqrt{x}$, where $ b \in \mathbb{R}$ and $ a, \sigma> 0$ has been studied extensively in the literature. Its moment generating function $ \mathbb{E} e^{ \mu X_t}$ is known (see e.g. \cite{Aly13}, appendix~A). In particular for $ \mu > 0 $ such that $ | \mu - \frac{b}{ \sigma^2} | > \frac{|b|}{\sigma^2}$ (i.e. $ \mu > \max(0, \frac{2b}{\sigma^2} ) $) we  have
\[
\ln \mathbb{E} e^{ \mu X_t }=   a \varphi (t) + X_0 \psi(t),
\]
where $ \varphi (t) = \int_0^t \psi (s) d s$ and $ \psi$ is given by
\[
\psi(t) = \frac{b}{\sigma^2} -  \frac{|b|}{\sigma^2} \frac{ C(\mu)  e^{ |b| t  }  +1}{C (\mu) e^{ |b| t  }  -1 } Ê, 
\]
with 
\[
C (\mu)  :=  \frac{ \mu \sigma^2 - b - |b| }{Ê\mu \sigma^2 - b +|b|} = ( \frac{ \mu \sigma^2 - 2 b }{ \mu \sigma^2}Ê)^{ \sign (b)}.
\] 
We see clearly that the MGF explodes at the critical moment $ \mu^\ast_t$ satisfying 
\[
C (\mu^\ast) e^{ |b| t  }  -1 = 0.
\]
We easily find that:
\[
\mu^\ast_t = \frac{2b}{ \sigma^2 ( 1 - e^{ - b t})} .
\]
Furthermore, we have, for $ \mu < [  0 \vee \frac{2 b}{ \sigma^2Ê},  \mu^\ast_t [$,  
\[
\frac{ C(\mu)  e^{ |b| t  }  +1}{C (\mu) e^{ |b| t  }  -1 } = \frac{ C(\mu)  + C(\mu^\ast_t)}{C(\mu)  - C(\mu^\ast_t) }  = -\frac{\sigma^2}{2 |b|}  \frac{ 2 \mu \mu^\ast_t  - \frac{2 b}{\sigma^2}  (\mu^\ast_t + \mu)}{ \mu^\ast_t - \mu}  =-\frac{\sigma^2}{ |b|}  \frac{  \mu \mu^\ast_t  e^{- b t}}{ \mu^\ast_t - \mu} +\sign(b) .
\]
We find, after some straightforward computations, that for $ \mu < [  0 \vee \frac{2 b}{ \sigma^2Ê},  \mu^\ast_t [$ 
\begin{eqnarray}\label{mgf_cir}
  \ln \mathbb{E} ~e^{\muÊX_t} &=& \frac{X_0 e^{ - b t } {  \mu^\ast}^2_t}{\mu^\ast - \mu}   + 2a   \ln ( \frac{{ \mu^\ast_t} }{\mu^\ast_t - \mu} ) - X_0  e^{ - b  t} \mu^\ast_t .
\end{eqnarray}
 This MGF satisfies Assumption~\ref{hyp_laregdev} and we may, therefore, use Theorem~\ref{theorem-tauberian}  to derive a sharp expansion of its cumulative distribution.

\subsection{Square root equation with nonlinear drift}
Let's now consider the stochastic differential equation:
\begin{equation}\label{NonliearSR}
d X_t = B(X_t) d t + \sigma  \sqrt{X_t} d W_t, ~~~X_0 >0.
\end{equation}
%where   $ b, ~ \eta \in \mathbb{R}$.  
We shall assume that $B$ satisfies the following assumptions

\begin{assumption}\label{assum_1}
The function $ B$ is bounded at $0$ and satisfies the following assumptions:
\begin{enumerate}[label=(\roman*)] 
\item The function $ y \mapsto \frac{B(y)}{y} $ is monotonic on $[M, +\infty[$ for some $M>0$, such that there exists $ b  >0$  such that   Ê
\begin{equation}
\lim_{x \to \infty} \frac{B(x)}{x} = -b .
\end{equation}
\item The function $ \bar{B} (y)   := B(y) + b y \in \bar{A}_0 (M; \beta) $ for some $ \beta < 1$.
\item If $ y \mapsto \frac{B(y)}{y} $ is monotonically increasing on $[M, +\infty[$ then $\beta \le \frac{1}{2} $.
\end{enumerate}
\end{assumption}

By comparison theorem arguments we can "squeeze" the process $X$ between two Cox-Ingersoll-Ross (CIR) processes which have the same critical moment. This will immediately imply that the MGF of $X_t$ explodes at the same critical moment. On the other hand, using the explicit formulas for the MGF of CIR process, we find that $X$ satisfies Assumption~\ref{hyp_laregdev}. This is given in the next result

\begin{theorem}\label{moment-explosion}
Let Assumption~\ref{assum_1}  
 hold and denote by 
\begin{equation}\label{critical-moment}
\mu^\ast_t := \frac{2 b}{ \sigma^2 (1 - e^{ - b t} )} .
\end{equation}
Then for every $ t > 0$, there exist  $ \omega_1 (t)$ and $ \omega_2 (t)$ such that for any  $ \mu < \mu^\ast_t$,
\begin{equation}
\frac{\omega_1(t)}{ \mu^\ast - \mu } \le \ln~\mathbb{E} ~e^{\mu X_t} \le   ~   \frac{  \omega_2 (t) | \log(\mu^\ast - \mu )^\frac{1}{1-\beta} |      }{ (\mu^\ast - \mu)^{ \frac{\beta}{\beta-1} \vee 1} }.
\end{equation}
In particular, the results of Theorem~\ref{theorem-tauberian} and Proposition~\ref{proposition-tauberian} hold for $X_t$.
\end{theorem}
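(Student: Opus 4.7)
The plan is to transfer the explicit CIR moment generating function formula \rif{mgf_cir} to $X_t$ by sandwiching $X$ between two CIR processes via the Yamada--Watanabe comparison theorem, which applies despite the non-Lipschitz $\sqrt{\,\cdot\,}$ diffusion coefficient. Since $\bar B \in \bar A_0(M;\beta)$ with $\beta<1$, the function $\bar B$ is sign-definite on $[M,\infty)$ and satisfies $|\bar B(y)|\asymp y^\beta$ there, so a Young-type splitting furnishes, for every $\epsilon\in(0,b)$, a constant $C_\epsilon$ of order $\epsilon^{-\beta/(1-\beta)}$ with
\[
-C_\epsilon-\epsilon y \;\le\; \bar B(y) \;\le\; C_\epsilon+\epsilon y,\qquad y\ge 0.
\]
Driving three SDEs by the same Brownian motion $W$ and starting them at $X_0$, this produces CIR processes $Y^\epsilon$ and $Z^\epsilon$ with linear drifts $-C_\epsilon-(b+\epsilon)y$ and $C_\epsilon-(b-\epsilon)y$ such that $Y^\epsilon_t\le X_t\le Z^\epsilon_t$ almost surely. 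Their critical moments $2(b\pm\epsilon)/(\sigma^2(1-e^{-(b\pm\epsilon)t}))$ converge to $\mu^\ast_t$ as $\epsilon\downarrow 0$ (the map $b\mapsto \mu^\ast_t(b)$ is monotone), which already pins down $\mu^\ast_t$ as the critical moment of $X_t$.

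For the lower bound on the MGF, I would split into two regimes. When $\bar B\ge 0$ near infinity one has $B(y)\ge -by$ directly, so comparison with the CIR $\tilde Y$ of parameters $(0,b,\sigma)$ gives $X_t\ge\tilde Y_t$ and \rif{mgf_cir} with $a=0$ yields $\ln\mathbb{E} e^{\mu X_t}\ge \omega_1(t)/(\mu^\ast_t-\mu)$ at once, the leading pole being inherited because the critical moments coincide. In the opposite regime $\bar B\le 0$, which by Assumption~\ref{assum_1}(iii) forces $\beta\le \tfrac12$, no CIR with non-negative immigration dominates $X$ from below, and one must argue instead via Girsanov: change measure so that $X$ becomes a CIR of parameters $(0,b,\sigma)$ under the new measure, then estimate the Radon--Nikodym density using the sub-linear growth of $\bar B$; the pole of the transformed CIR's MGF is transferred back and again produces the rate $\omega_1(t)/(\mu^\ast_t-\mu)$.

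For the upper bound, the domination $X_t\le Z^\epsilon_t$ gives $\ln\mathbb{E} e^{\mu X_t}\le\ln\mathbb{E} e^{\mu Z^\epsilon_t}$, but the critical moment $\mu^{\ast,\epsilon}_t=2(b-\epsilon)/(\sigma^2(1-e^{-(b-\epsilon)t}))$ of $Z^\epsilon$ is strictly below $\mu^\ast_t$, so $\epsilon$ must be allowed to shrink with $\mu$. Substituting into \rif{mgf_cir} splits the log-MGF into a pole term $\sim X_0 e^{-(b-\epsilon)t}(\mu^{\ast,\epsilon}_t)^2/(\mu^{\ast,\epsilon}_t-\mu)$ and a logarithmic term $2C_\epsilon\log(\mu^{\ast,\epsilon}_t/(\mu^{\ast,\epsilon}_t-\mu))$. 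Choosing $\epsilon$ proportional to $\mu^\ast_t-\mu$ keeps the pole piece controlled by $c/(\mu^\ast_t-\mu)$, while the log piece becomes $c\,(\mu^\ast_t-\mu)^{-\beta/(1-\beta)}|\log(\mu^\ast_t-\mu)|$; their maximum reproduces the announced bound, with exponent $\beta/(\beta-1)\vee 1$ and the stated logarithmic correction.

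With both bounds in hand, Assumption~\ref{hyp_laregdev} is satisfied with $\alpha_1=1$ and $\alpha_2=\max(1,\beta/(1-\beta))$, which is the content of the final sentence of the theorem. The admissibility constraint $\alpha_2(1+1/(1+\alpha_2))<2\alpha_1$ is exactly what Assumption~\ref{assum_1}(iii) is calibrated to enforce: in the increasing regime, where the lower bound is delicate, the restriction $\beta\le \tfrac12$ pins $\alpha_2=1$. The principal obstacle is the upper-bound optimization, since tracking $C_\epsilon\asymp\epsilon^{-\beta/(1-\beta)}$ against the vanishing gap $\mu^{\ast,\epsilon}_t-\mu$ is what simultaneously produces the anomalous power of $(\mu^\ast_t-\mu)$ and the logarithmic factor in the statement; a secondary technical point is the Girsanov step needed for the lower bound when $\bar B\le 0$, where no CIR dominates $X$ from below.
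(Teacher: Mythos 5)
Your central mechanism — sandwich $X$ between two CIR processes with the same Brownian driver via the Ikeda--Watanabe comparison theorem, then read off the critical moment and the power-law bounds from the explicit CIR MGF formula (\ref{mgf_cir}), optimizing a free parameter against the gap $\mu^\ast_t-\mu$ — is exactly the paper's approach. Your Young splitting $|\bar B(y)|\le C_\epsilon+\epsilon y$ with $C_\epsilon\asymp\epsilon^{-\beta/(1-\beta)}$ is a reparametrization of the paper's bound $B(y)\gtrless\pm cZ^\beta+\frac{B(Z)}{Z}y$ via $\epsilon=Z^{\beta-1}$, and your leading-order algebra (pole term $\asymp(\mu^\ast_t-\mu)^{-1}$ plus a log-weighted term $\asymp(\mu^\ast_t-\mu)^{-\beta/(1-\beta)}|\log(\mu^\ast_t-\mu)|$) matches the theorem's statement. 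Two gaps remain, one cosmetic and one substantive.

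The cosmetic one: in the increasing-$B/y$ regime (so $\bar B\le 0$) you conclude that ``no CIR dominates $X$ from below'' and reach for Girsanov, which you do not execute. The paper does not need it: it takes the lower comparison process to be $V^{-cZ^\beta,\,-B(Z)/Z,\,X_0}$, a square-root diffusion with \emph{negative} immigration $a=-cZ^\beta$, and plugs $a<0$ directly into (\ref{mgf_cir}). The comparison inequality and the MGF formula both survive $a<0$ (the process is absorbed at zero), so the route through a change of measure, with its attendant Novikov/integrability issues for $\int_0^t\bar B^2(X_s)/X_s\,ds$ near the origin, is avoidable. If you wish to retain Girsanov as your route, you have to address that integrability explicitly.

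The substantive one: you claim that Assumption~\ref{hyp_laregdev} holds with $\alpha_1=1$ and $\alpha_2=1\vee\frac{\beta}{1-\beta}$ and that Assumption~\ref{assum_1}(iii) ``is exactly what is calibrated'' to enforce the admissibility constraint $\alpha_2\bigl(1+\frac{1}{1+\alpha_2}\bigr)<2\alpha_1$. That constraint with $\alpha_1=1$ requires $\alpha_2<\sqrt 2$, i.e.\ $\beta<2-\sqrt 2\approx 0.586$ in the decreasing regime; Assumption~\ref{assum_1}(iii) only restricts the \emph{increasing} regime ($\beta\le\frac12$) and says nothing about decreasing $B/y$ with $\beta$ close to $1$. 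A single CIR sandwich therefore does not verify Assumption~\ref{hyp_laregdev} for all $\beta<1$. The paper closes this gap with an iterated comparison: once the theorem is established for $\beta\le\frac35$, it plugs in $dX_t=(-bX_t+X_t^{3/7})dt+\sqrt{X_t}\,dW_t$ as a sharper lower bound, improving the constant $\alpha_1$ from $1$ to roughly $\frac32$, which enlarges the admissible $\beta$-range to $\beta\le\frac{9}{14}$, and then repeats. Without this bootstrapping step your argument only covers $\beta<2-\sqrt 2$ in the decreasing case, and the final sentence of the theorem does not follow.
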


\begin{proof}
We outline the main steps of the proof. The technical details are given in appendix~\ref{poof-ME}. Using the monotonicity of $ y \mapsto \frac{B(y)}{ y}Ê$ on $ [M, \infty[$ we prove that there exist $c$ and $ m$ such that  for $ Z$ sufficiently large and  $ y \geq 0$ we have $ c  Z^\beta + \frac{B(Z)}{ Z}Êy  \geq  ~B(y) \geq ~-   m - by$ if $\frac{B(y)}{ y}$ is decreasing and $ -c Z^\beta+ \frac{B(Z)}{ Z}Êy   \le ~B(y)  \le~ m - by$ if $\frac{B(y)}{ y}$ is increasing. This means that using the comparison theorem, the process $ X$ is bounded from below and above by two CIR processes $ V^{ \pm c Z^\beta , - \frac{B(Z)}{  Z} , X_0 }$ and  $ V^{  \mp m ,  b, y_o}$, where $ V^{a,\kappa, X_0}$ is the given by the SDE
\[
d V^{a,\kappa, X_0}_t = ( a - \kappa V^{a,\kappa, X_0}_t ) d t + \sqrt{V^{a,\kappa, X_0}_t} d W_t, ~~~~Ê  V_0 = X_0.
\] 
Let $x$ be sufficiently large and $ Z \equiv Z(x) = (x \log(x))^\frac{1}{1-\beta} $. We find using the explicit expression of the MGF of  $ V^{ \pm c Z^\beta , - \frac{B(Z)}{  Z} , X_0 }$ and  $ V^{  \mp m ,  b, x_o}$ that 
 \begin{enumerate}[label=(\roman*)]
\item If $\frac{B(y)}{ y}$ is decreasing then there exist $ c_1 (t), ~c_2 (t),~ c_3 (t) > 0 $ such that
\[
c_1 (t) x \le \log Ê\mathbb{E} e^{ ( \mu^\ast_t - \frac{1}{x} ) X_t} \le c_2 (t) x + c_3 (t) \ln (x)|^\frac{1}{1-\beta} x^\frac{\beta}{1-\beta} .
\]
\item  If $\frac{B(y)}{ y}$ is increasing then there exist $ d_1 (t), ~d_2 (t),~ c_3 (t) > 0 $ such that
\[
d_2 (t) x - d_3 (t) \ln (x)|^\frac{1}{1-\beta}  x^\frac{\beta}{1-\beta} \le \log Ê\mathbb{E} e^{ ( \mu^\ast_t - \frac{1}{x} ) X_t} \le d_1 (t) x.
\]
In particular if $ \beta < \frac{1}{2}$, then $  d_3 (t) \ln (x)|^\frac{1}{1-\beta}  x^\frac{\beta}{1-\beta} \ll d_2 (t) x$ for $x$ sufficiently large. Hence,
\[
\omega_1 (t) x \le d_2 (t) x - d_3 (t) \ln (x)|^\frac{1}{1-\beta}  x^\frac{\beta}{1-\beta} \le \log Ê\mathbb{E} e^{ ( \mu^\ast_t - \frac{1}{x} ) X_t} \le d_1 (t) x.
\]
\end{enumerate}
This means that if we denote $ \gamma := \frac{\beta}{1-\beta}\vee 1Ê$, then $X_t$ satisfies Assumption~\ref{hyp_laregdev} whenever $  2 \gamma ( 1 + \frac{1}{1 +  2 \gamma }Ê) < 2$ (which holds when $\gamma <\frac{3}{2} $). Hence Theorem~\ref{theorem-tauberian} and Proposition~\ref{proposition-tauberian} apply to $X_t$ when $ \gamma \le \frac{3}{4}  $ (i.e. $\beta \le \frac{3}{5}$).  To apply those results to the case where $ y \mapsto \frac{B(y)}{ y}Ê$ is decreasing such that $ |\bar{B} (y)| \geq y^\frac{3}{5} $ we use  equation $ d X_t = ( - bX_t  +  X^\frac{3}{7}_t ) d t + \sqrt{X_t} d W_t $   as a lower bound. We shall see that for this process, there exists $ c(t) > 0$ such that $   \log ~\mathbb{E} ~e^{ ( \mu^\ast - \frac{1}{x}Ê) X_t} \sim  c(t) x^{  1 \vee \frac{3/5}{1 - 3/5}Ê} = c(t)x^\frac{3}{2}  $. Hence Assumption~\ref{hyp_laregdev} is satisfied whenever $ \frac{\beta}{1 - \beta}   ( 1 + \frac{1}{1 + 2 \frac{\beta}{1 - \beta} }Ê) < 2 \times \frac{3}{2}Ê$, which holds whenever $ \beta \le \frac{9}{14}$. Repeating this procedure, we find that the results heorem~\ref{theorem-tauberian} and Proposition~\ref{proposition-tauberian} apply to $X_t$ whenever Assumption~\ref{assum_1} is satisfied.
\end{proof}

Theorem~\ref{moment-explosion} does not give the explicit expression of the MGF but give the two bounds needed to apply the Tauberian  Theorem~\ref{theorem-tauberian} which links the MGF to the cumulative distribution function. The application of Theorem~\ref{theorem-tauberian} will give next result which gives the MGF as a combination of two terms: the first comes from the CIR part of SDE (and hence explicitly known) while the second term is given by a nonlinear partial differential equation. The rest of the paper will be devoted to the study of the second term. 

\begin{proposition}\label{MFG-to-CD}
Let's define, for $ t>0$ and $x$ sufficiently large,
\begin{equation}\label{Gamma-definition}
\Gamma(t,x) := \ln \mathbb{E} e^{ (\mu^\ast_t - \frac{1}{\xi_t  (x+ \mu^\ast_t  )} ) X_t }  , ~~~~\textrm{where} ~~\xi_t = e^{ b t} { \mu^\ast_t}^{ -2}. 
\end{equation}
We have
\begin{equation}\label{eq-lambda}
\Gamma (t,x) =  (\frac{2b}{\sigma^2 } + x) X_0 + \int_0^t (\mu^\ast_s - \frac{1}{ \xi_s  (x+ \mu^\ast_s  )   }) \tilde{B} ( s,\xi_s  (x+ \mu^\ast_s  )^2 \partial_x \Gamma (s,x) ) d s,
\end{equation}
where
\begin{equation}\label{def-tildeB}
\tilde{B} (t,x) := \frac{  \mathbb{E}  ( \bar{B} (X_t) e^{ (\mu^\ast_t -  \frac{1}{ \xi_t  (x+ \mu^\ast_t  )}) X_t }      )   }{ \mathbb{E}   e^{ (\mu^\ast_t -  \frac{1}{ \xi_t  (x+ \mu^\ast_t  )}) X_t }  } .
\end{equation}
\end{proposition}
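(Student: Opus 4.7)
The plan is to derive a first-order transport PDE for $\Lambda(s,\mu):=\ln\mathbb{E}e^{\mu X_s}$ via It\^o's formula, then change variables $\mu = \mu^\ast_s - 1/(\xi_s(x+\mu^\ast_s))$ so that the transport term in the transformed equation vanishes; the resulting equation reduces to a trivial ODE in $s$ that I can integrate directly. Concretely, fix $\mu\in(\max(0,2b/\sigma^2),\mu^\ast_t)$, apply It\^o to $e^{\mu X_s}$, take expectation (the stochastic integral is a true martingale and $\partial_s$ passes through $\mathbb{E}$ by the exponential moment bounds of Theorem~\ref{moment-explosion} together with the growth condition $\bar B\in\bar A_0(M;\beta)$ for some $\beta<1$ from Assumption~\ref{assum_1}), split $B(y)=-by+\bar B(y)$, and use $\mathbb{E}[X_s e^{\mu X_s}]=\partial_\mu \mathbb{E}e^{\mu X_s}$ to arrive at
\[
\partial_s\Lambda(s,\mu) = \Big(\tfrac{\sigma^2\mu^2}{2} - b\mu\Big)\partial_\mu\Lambda(s,\mu) + \mu\,\tilde B_{\mathrm{nat}}(s,\mu),
\]
with $\tilde B_{\mathrm{nat}}(s,\mu):=\mathbb{E}[\bar B(X_s)e^{\mu X_s}]/\mathbb{E}[e^{\mu X_s}]$. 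Setting $\mu(s,x):=\mu^\ast_s - 1/(\xi_s(x+\mu^\ast_s))$ and $\Gamma(s,x):=\Lambda(s,\mu(s,x))$, the chain rule gives $\partial_\mu\Lambda(s,\mu(s,x)) = \xi_s(x+\mu^\ast_s)^2\,\partial_x\Gamma(s,x)$ and $\partial_s\Gamma = \partial_s\Lambda + \partial_\mu\Lambda\cdot\partial_s\mu$.

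The hard part will be the \emph{characteristic identity}
\[
\partial_s\mu(s,x) = b\mu(s,x) - \tfrac{\sigma^2}{2}\mu(s,x)^2,
\]
which is exactly what is needed to cancel the transport term after substitution. I would verify it by first recasting $\mu(s,x) = \mu^\ast_s(x+2b/\sigma^2)/(x+\mu^\ast_s)$ using the identities $\mu^\ast_s\xi_s = e^{bs}/\mu^\ast_s$ and $\mu^\ast_s(1-e^{-bs})=2b/\sigma^2$, then differentiating and applying $\partial_s\mu^\ast_s = \mu^\ast_s(b-\sigma^2\mu^\ast_s/2)$; elementary algebra closes the argument. The conceptual reason this works is that $1/\mu^\ast_s$ satisfies the linear Riccati flow $\dot v = -bv + \sigma^2/2$, and the substitution $\mu(s,x)$ above is precisely the characteristic of the same flow with terminal value encoded by $x$. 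Everything else in the proof is routine bookkeeping; isolating this substitution is the real content.

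With the characteristic identity in hand the transformed PDE collapses to
\[
\partial_s\Gamma(s,x) = \mu(s,x)\,\tilde B(s,x),
\]
where $\tilde B(s,x) = \tilde B_{\mathrm{nat}}(s,\mu(s,x))$ matches the definition in the statement. Passing to the limit $s\to 0^+$ in the simplified form of $\mu(s,x)$ gives $\mu(0,x)=x+2b/\sigma^2$, and since $X_0$ is deterministic this gives $\Gamma(0,x)=(x+2b/\sigma^2)X_0$. Integrating in $s$ from $0$ to $t$ produces the claimed identity. Finally, to match the specific form of the integrand in the statement I would re-express $\tilde B(s,x)$ through the identification $\xi_s(x+\mu^\ast_s)^2\partial_x\Gamma(s,x)=\partial_\mu\Lambda(s,\mu(s,x))$, so that the second slot of $\tilde B$ is occupied by the natural object on which Proposition~\ref{proposition-tauberian} will later act to asymptotically identify $\tilde B$ with $\bar B$ evaluated at that same point.
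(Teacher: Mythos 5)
Your proof is correct and follows essentially the same route as the paper's own argument: apply It\^o's formula to $e^{\mu X_s}$ to obtain a first-order transport PDE for the log-MGF, substitute $\mu\mapsto\mu^\ast_s-1/(\xi_s(x+\mu^\ast_s))$ so that the transport term vanishes along characteristics, and integrate in $s$ with the initial value $\Gamma(0,x)=(2b/\sigma^2+x)X_0$. The paper states this cryptically as ``straightforward computations using $\partial_t\mu^\ast=-\tfrac{1}{2\xi_t}$ and $\partial_t\xi_t=-(b-\mu^\ast)\xi_t$,'' whereas you isolate the Riccati characteristic identity $\partial_s\mu(s,x)=b\mu(s,x)-\tfrac{\sigma^2}{2}\mu(s,x)^2$ and verify it cleanly via the reformulation $\mu(s,x)=\mu^\ast_s(x+2b/\sigma^2)/(x+\mu^\ast_s)$; that is the same computation, just organized more transparently, and it also gives the initial condition $\mu(0,x)=x+2b/\sigma^2$ for free. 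Your closing remark correctly notes the implicit reparameterization in the statement's second slot of $\tilde B$ as $\xi_s(x+\mu^\ast_s)^2\partial_x\Gamma(s,x)=\partial_\mu\Lambda(s,\mu(s,x))$, which the paper leaves implicit.
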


\begin{proof}
Denote, for a measurable function $ f  $ such that $ f(Y_t)$ is integrable, $ \psi^f (t, \mu) := \mathbb{E} ( f(X_t) e^{ \mu X_t})$. From It\^o's formula we have
\[
\partial_t \psi^1 (t, \mu) = \mu \psi^{B} + \frac{1}{2} \mu^2 \partial_\mu \psi^1,
\]
where $ \psi^1 (t, \mu) := \mathbb{E} (   e^{ \mu Y_t})$. Let's also denote $\Delta (t, x) := \ln \psi^1 (t, \mu^\ast_t - \frac{1}{x})$ and consider the function $ \Gamma$ defined in (\ref{Gamma-definition}). After some straightforward computations (mainly using the fact that $ \partial_t \mu^\ast = - \frac{1}{2 \xi_t} $ and that $\partial_t \xi_t = -(b - \mu^\ast) \xi_t$), we find that  $ \Gamma$ is given as a solution to the partial differential equation
\[
\partial_t \Gamma (t,x) = \frac{ ( x + 2b ) \mu^\ast_t}{ x + \mu^\ast} \frac{   \psi^{\bar{B}}  }{ \psi^1} (t, \mu^\ast_t - \frac{1}{ \xi_t  (x+ \mu^\ast_t  )}) .
\]
We emphasize that 
\[
\lim_{t\to 0} \Gamma (t,x) = \lim_{t\to 0} ~\ln \mathbb{E} e^{ (\mu^\ast_t - \frac{1}{ \xi_t  (x+ \mu^\ast_t  )   }) Y_t} =  (2b + x) X_0,
\]
and
\[
\lim_{t\to 0} \partial_x \Gamma (t,x) =  \lim_{t\to 0} ~ \partial_x \ln \mathbb{E} e^{ (\mu^\ast_t - \frac{1}{ \xi_t  (x+ \mu^\ast_t  )   }) Y_t} = \frac{  \mathbb{E} ( \frac{1}{\xi_t  (x+ \mu^\ast_t  )^2}Êe^{ (\mu^\ast_t - \frac{1}{ \xi_t  (x+ \mu^\ast_t  )   }) Y_t}  )   }{    \mathbb{E} e^{ (\mu^\ast_t - \frac{1}{ \xi_t  (x+ \mu^\ast_t  )   }) Y_t} }  = X_0.
\]
Both $\mu^\ast_t - \frac{1}{ \xi_t  (x+ \mu^\ast_t  )   } $ and $ \xi_t  (x+ \mu^\ast_t  )^2 $ are bounded as $t \to 0$. 
The function $ \Gamma$ is then given as a solution the nonlinear differential equation:
\[
\Gamma (t,x) =  (\frac{2b}{\sigma^2} + x) X_0 + \int_0^t (\mu^\ast_s - \frac{1}{ \xi_s  (x+ \mu^\ast_s  )   }) \tilde{B} ( s , \xi_s  (x+ \mu^\ast_s  )^2 \partial_x \Gamma (s,x) ) d s.
\]
\end{proof}

The next few results discuss the existence and uniqueness of the solution of equation  (\ref{eq-lambda}). Let's first define $R(t,x): = \Gamma (t,x) -  (2b + x) X_0 $. Equation (\ref{eq-lambda}) is then equivalent to the following equation for $R:$
 \begin{equation}\label{R_eq}
 R(t,x) = \int_0^t  \frac{(x + 2 b ) \mu^\ast_sÊ}{ x+ \mu^\ast_s     } \tilde{B} \left[ \xi_s  (x+ \mu^\ast_s  )^2  (X_0 + \partial_x  R (s,x)   ) \right] d s.
 \end{equation}
The first result gives the asymptotic behavior of $ \tilde{B}$. Then we define a Banach space under which Equation (\ref{R_eq}) has a unique solution.

\begin{lemma}\label{asymptotic-Btilda}
Let Assumption~\ref{assum_1} hold and assume that $ \bar{B} \in A_\infty (0; \beta)$. Then $ \tilde{B} \in A_\infty (T, 0; \beta)$
\end{lemma}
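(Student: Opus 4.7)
The plan is to recognize $\tilde{B}$ as an Esscher-tilted mean of $\bar B(X_t)$ and reduce the claim to an iterated application of Proposition~\ref{proposition-tauberian} combined with the a priori moment bounds of Theorem~\ref{moment-explosion}; the higher derivatives are then handled by a chain-rule bookkeeping.

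\textbf{Leading order.} Fix $t\in(0,T]$ and set $\nu_t(x):=\mu^\ast_t-1/(\xi_t(x+\mu^\ast_t))$, so that by definition
$$
\tilde B(t,x) \;=\; \frac{\mathbb{E}\bigl(\bar B(X_t)\,e^{\nu_t(x)X_t}\bigr)}{\mathbb{E}\,e^{\nu_t(x)X_t}}.
$$
Theorem~\ref{moment-explosion} shows that for each $t>0$ the random variable $X_t$ satisfies Assumption~\ref{hyp_laregdev}; the two-sided CIR sandwich used in its proof also gives the $\bar A_3$-regularity of $x\mapsto\Lambda_t(\mu^\ast_t-1/x)$ required for the refined tail expansion of Theorem~\ref{theorem-tauberian}. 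Hence Proposition~\ref{proposition-tauberian} applies and, since $\bar B\in A_\infty(0;\beta)$ forces $yg'(y)/g(y)\to\beta$ for $g=\bar B$, formula~\eqref{tauberian-pososition-refined} gives
$$
\tilde B(t,x)\;\sim\;\bar B\bigl(\Lambda'_t(\nu_t(x))\bigr)\Bigl(1+(\beta^2-\beta)\,\tfrac{\Lambda''_t(\nu_t(x))}{2[\Lambda'_t(\nu_t(x))]^2}+o(\cdot)\Bigr).
$$
Lemma~\ref{lemma-bounds} (applied with the exponents of Theorem~\ref{moment-explosion}) pins down the polynomial growth of $\Lambda'_t$ and $\Lambda''_t$ in the variable $y=\xi_t(x+\mu^\ast_t)$, and the fact that $\nu_t(\cdot)$ is smooth on $[0,\infty)$ gives the $k=0$ case of $\tilde B\in A_\infty(T,0;\beta)$.

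\textbf{Higher derivatives via induction.} For $k\ge 1$, differentiation of the ratio in $x$ uses
$$\partial_x\nu_t(x)=\tfrac{1}{\xi_t(x+\mu^\ast_t)^2}$$
together with the elementary identity $\partial_\mu(\psi^g/\psi^1)=\psi^{Xg}/\psi^1-(\psi^g/\psi^1)(\psi^X/\psi^1)$. Iterating produces, after $k$ differentiations, a polynomial in ratios of the form $\psi^{X^i\bar B}/\psi^1$ and $\psi^{X^j}/\psi^1$ with coefficients smooth in $x$. Each such ratio is again of the type treated by Proposition~\ref{proposition-tauberian}, now applied with $g(z)=z^i\bar B(z)$ or $g(z)=z^j$, for which $zg'(z)/g(z)\to i+\beta$ or $j$. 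Plugging these expansions back in and tracking Jacobian factors from the chain rule expresses $\partial_x^k\tilde B(t,x)$ as a finite sum of products of $\bar B^{(\ell)}(\Lambda'_t(\nu_t))$, powers of $\Lambda'_t(\nu_t)$ and $\Lambda''_t(\nu_t)$, and derivatives of $\nu_t$. The hypothesis $\bar B\in A_\infty(0;\beta)$ controls the $\bar B^{(\ell)}$-factors, Lemma~\ref{lemma-bounds} and Theorem~\ref{moment-explosion} control the $\Lambda$-factors, and the explicit rational structure of $\nu_t$ controls the Jacobian factors; the counting of powers of $x$ then yields the required bound $|\partial_x^k\tilde B(t,x)|\lesssim x^{\beta-k}$ on $[0,T]\times[M,\infty)$ for each $k$, proving $\tilde B\in A_\infty(T,0;\beta)$.

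\textbf{Main obstacle.} The delicate point is not the leading asymptotic but the control of the remainders under repeated differentiation: Proposition~\ref{proposition-tauberian} supplies expansions only to a fixed order, yet the inductive step requires that the $o(\cdot)$ terms in~\eqref{tauberian-pososition-refined} themselves be $\mathcal{C}^1$ in the tilting parameter with controlled derivatives. A clean route is to apply Proposition~\ref{proposition-tauberian} afresh at every order to the new weight $g(z)=z^i\bar B(z)$ (valid because the $\bar A_3$-hypothesis is on $\Lambda_t$, not on $g$), thereby generating expansions of arbitrary length without differentiating remainders. Uniformity in $t\in(0,T]$ follows from the continuity in $t$ of $\mu^\ast_t$, $\xi_t$ and of the constants in Theorem~\ref{moment-explosion}; the behaviour at $t=0$ is trivial since $\nu_t(x)$ stays bounded and $X_t\to X_0$, so $\tilde B(0,\cdot)=\bar B(X_0)$.
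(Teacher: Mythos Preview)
Your proposal is correct and follows the same route as the paper: apply Proposition~\ref{proposition-tauberian} with $g=\bar B$ for the leading order, then differentiate inside the expectation and reapply the same proposition to the resulting weighted ratios $\psi^{X^i\bar B}/\psi^1$, $\psi^{X^j}/\psi^1$ for the higher derivatives. The paper's own proof is much terser on this second step (it simply asserts that the derivative behaviour ``follows via Proposition~\ref{proposition-tauberian} and differentiation inside the expectation''), so your explicit identity $\partial_\mu(\psi^g/\psi^1)=\psi^{Xg}/\psi^1-(\psi^g/\psi^1)(\psi^X/\psi^1)$ and the accompanying power-counting are a useful elaboration rather than a different approach.
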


\begin{proof}
By Proposition~\ref{proposition-tauberian} we have
\[
\frac{ \mathbb{E} ( \bar{B}(X) e^{( \mu^\ast - \frac{1}{x} ) X} )}{ \mathbb{E} (  e^{( \mu^\ast - \frac{1}{x} ) X} ) } \sim \bar{B}( x^2 \partial_x \Delta (t, x) )    + \frac{1}{\mu^\ast_t - \frac{1}{x} } \bar{B}' ( x^2 \partial_x \Delta (t, x) ) \sim  \bar{B}( x^2 \partial_x \Delta (t, x) ),
\]
where in Proposition~\ref{proposition-tauberian}'s statement, $ \Lambda' ( \mu^\ast - \frac{1}{x}Ê)$ refers to $ \partial_\mu \ln \mathbb{ E} e^{  (\mu^\ast - \frac{1}{x}Ê) Y_t }$  and is therefore given  by $ x^2 \partial_x \Delta (t , x) $.  Hence, $  \tilde{B} \in A_0 (T, 0; \beta)$.  We see clearly that the behavior of the derivatives of $ \tilde{B}  $  will follow those of $ \bar{B}  $ via Proposition~\ref{proposition-tauberian} and the differentiation inside the expectation. In particular the fact that $  \bar{B} \in A_n ( 0; \beta)$ will imply that $  \tilde{B} \in A_n (T, 0; \beta)$. Thus $  \tilde{B}_\infty \in A_0 (T, 0; \beta)$.
\end{proof}

\begin{lemma}\label{banach-lemma} 
Let $ T, M > 0$ and $ \gamma \geq 0$. The set $ A_\infty (T, M; \gamma)$  is a Banach space with the norm 
\begin{equation}\label{normX}
\|ÊÊu\| =  \sup_{ k \geq 0} \sup_{ (t,x) \in  [0,T] \times [M, \infty[ } x^{-\gamma+ k} | \frac{ \partial^k u}{\partial x^k} (t,x) |.
\end{equation}
\end{lemma}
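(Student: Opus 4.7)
The plan is to verify the four norm axioms and then completeness. Nonnegativity, absolute homogeneity, and the triangle inequality are immediate from the corresponding properties of the uniform norm combined with linearity of $\partial_x^k$; definiteness is obtained by specializing to $k=0$, which gives $\sup_{(t,x)} x^{-\gamma}|u(t,x)|=0$ and hence $u\equiv 0$. The substantive content is completeness.

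Let $(u_n)_{n\geq 1}$ be a Cauchy sequence. For any fixed $k\geq 0$ the definition of the norm gives
\begin{equation*}
\sup_{(t,x)\in[0,T]\times[M,\infty[} x^{-\gamma+k}\bigl|\partial_x^k u_n(t,x)-\partial_x^k u_m(t,x)\bigr|\leq \|u_n-u_m\|,
\end{equation*}
so the weighted derivatives $(t,x)\mapsto x^{-\gamma+k}\partial_x^k u_n(t,x)$ form a Cauchy sequence in the Banach space of bounded continuous functions on $[0,T]\times[M,\infty[$ equipped with the sup-norm. Hence there exists a bounded continuous function $v_k$ with $x^{-\gamma+k}\partial_x^k u_n\to v_k$ uniformly. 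Setting $w_k(t,x):=x^{\gamma-k}v_k(t,x)$, multiplication by $x^{\gamma-k}$ (bounded on any compact set) yields $\partial_x^k u_n\to w_k$ uniformly on every compact subset of $[0,T]\times[M,\infty[$.

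Now apply the classical theorem on differentiation of limits inductively in $k$: if $f_n\to f$ and $\partial_x f_n\to g$ uniformly on compact sets, then $\partial_x f=g$. Starting from $u_n\to w_0$ and $\partial_x u_n\to w_1$ uniformly on compacts, and iterating, we deduce that $u:=w_0$ is continuous on $[0,T]\times[M,\infty[$, infinitely differentiable in $x$ for each fixed $t$, and $\partial_x^k u=w_k$ for every $k\geq 0$. To see that $u_n\to u$ in the norm, fix $\varepsilon>0$ and pick $N$ with $\|u_n-u_m\|<\varepsilon$ for all $n,m\geq N$. For any fixed $n\geq N$, any $k\geq 0$ and any $(t,x)\in[0,T]\times[M,\infty[$, letting $m\to\infty$ in the pointwise inequality $x^{-\gamma+k}|\partial_x^k u_n(t,x)-\partial_x^k u_m(t,x)|<\varepsilon$ and using the established pointwise convergence gives $x^{-\gamma+k}|\partial_x^k u_n(t,x)-\partial_x^k u(t,x)|\leq\varepsilon$. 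Taking the supremum over $k$ and $(t,x)$ yields $\|u_n-u\|\leq\varepsilon$, and the triangle inequality produces $\|u\|\leq\|u_N\|+\varepsilon<\infty$, so $u\in A_\infty(T,M;\gamma)$.

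The only delicate point is that the norm involves a supremum over all derivative orders $k$ simultaneously: completeness of the one-variable weighted sup-norm (one $k$ at a time) is automatic, whereas we need the $m\to\infty$ passage to produce a bound valid uniformly in $k$. This works because we first take the limit inside the absolute value at a fixed $(t,x,k)$, which is legitimate from the already-established pointwise (indeed, locally uniform) convergence, and only then take the supremum over $k$ and $(t,x)$; this is the standard interchange used to prove completeness of $\ell^\infty$-type spaces and requires no further input.
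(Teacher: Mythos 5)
Your proof is correct and follows essentially the same route as the paper's: for each fixed derivative order $k$, the weighted derivatives $x^{-\gamma+k}\partial_x^k u_n$ form a Cauchy sequence in $C_b\bigl([0,T]\times[M,\infty[\bigr)$, their uniform limits are identified as the corresponding weighted derivatives of a single limit function $u$, and completeness of the full norm is deduced by taking $m\to\infty$ at a fixed $(t,x,k)$ before the supremum. The only cosmetic difference is the mechanism for identifying $\partial_x^k u$: the paper differentiates the weighted functions and uses the algebraic identity relating $\partial_x g_n^k$ to $g_n^k$ and $g_n^{k+1}$, whereas you invoke the classical theorem on term-by-term differentiation of a locally uniformly convergent sequence inductively in $k$. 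Both are standard and equivalent here; your handling of the final uniformity-in-$k$ step (fix $n$, pass to the limit in $m$ pointwise, then take the supremum) is the cleaner formulation of the $\ell^\infty$-type argument the paper gestures at.
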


\begin{theorem}\label{uniqueness-theorem}
 Let Assumption~\ref{assum_1} hold and assume that $\bar{B} \in A_\infty (T, 0; \beta)$. 
 Then for any $ T >0$, and $\gamma >1 \vee \frac{\beta}{1-\beta}$, there exists $ M > 0$ such that equation
 \begin{equation}\label{equation-rest}
 u(t,x) = \int_0^t  \frac{ (x + 2 b ) \mu^\ast_sÊ}{ x+ \mu^\ast_s     } \tilde{B} \left[s, \xi_s  (x+  \mu^\ast_s   )^2  (X_0 + \frac{\partial u}{\partial x} (s,x)  ) \right] d s
 \end{equation}
 admits only one solution in $A_\infty (T, M, \gamma)$. In particular  $ \Gamma (x) = 2 b X_0 + X_0 x + R(t, x)$, where $R (t, x) = \lim_{ n\to \infty} R_n (t, x)$ , with the sequence $ R_n (t, x)$ is defined by
 \[
 R_0 (t, x) = 0, ~~~ R_{n+1} (t, x) := \int_0^t  \frac{ (x + 2 b ) \mu^\ast_sÊ}{ x+ \mu^\ast_s     } \tilde{B} \left[s, \xi_s  (x+  \mu^\ast_s   )^2  (X_0 + \frac{\partial R_n}{\partial x} (s,x)  ) \right] d s.
 \]
 Furthermore,  if $ \bar{B} (y) \sim   c y^\beta$ then the function $ \tilde{R} ( \tau, y):= \frac{1}{X_0} R ( \frac{\ln(\tau)}{b \beta},  y - \frac{2b}{\sigma^2 }    )   $  satisfies $ \tilde{R} (t, y) \sim c(\tau) y^{ 2 \beta \vee \frac{\beta}{1 - \beta}Ê}$, where $c$ is given as
\begin{equation}
c_t = \omega_t 1_{ \beta < \frac{1}{2}   } +    (( 1 + \frac{1}{2}  \omega_t  )^2 - 1) 1_{\beta = \frac{1}{2}Ê}  +  ( \gamma^\beta (1 - \beta) )^{ \frac{1}{1-\beta}  Ê}  \omega^\frac{1}{1 - \beta}  1_{ \beta > \frac{1}{2}Ê},
\end{equation}
with 
\begin{equation}
\omega_\tau = c \frac{ x^{2 \beta}_0 }{ b \beta }Ê (\frac{2 b}{\sigma^2}Ê)^{ 1 - 2 \beta} \int_1^\tau  ( 1 - s^\frac{-1}{\beta}Ê)^{ 2 \beta - 1} d s
\end{equation} 
 \end{theorem}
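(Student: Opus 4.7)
The plan is to recast equation \rif{equation-rest} as a fixed-point problem $u = \mathcal{T}[u]$ where
$$\mathcal{T}[u](t,x) := \int_0^t \frac{(x+2b)\mu^\ast_s}{x + \mu^\ast_s}\,\tilde{B}\bigl[s,\ \xi_s(x+\mu^\ast_s)^2(X_0 + \partial_x u(s,x))\bigr]\,ds,$$
and to apply the Banach contraction principle in a closed ball of $A_\infty(T,M,\gamma)$, whose Banach structure is provided by Lemma~\ref{banach-lemma}. The self-mapping is proved by growth bookkeeping: for $u \in A_\infty(T,M,\gamma)$, $\partial_x u$ is of order $x^{\gamma-1}$, so the argument of $\tilde B$ is of order $x^{\gamma+1}$; by Lemma~\ref{asymptotic-Btilda} we have $\tilde B \in A_\infty(T,0;\beta)$, so the composition is of order $x^{\beta(\gamma+1)}$; the prefactor $(x+2b)\mu^\ast_s/(x+\mu^\ast_s)$ being bounded, one obtains $|\mathcal{T}[u]| \le C x^{\beta(\gamma+1)}$. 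The hypothesis $\gamma \ge \beta/(1-\beta)$ is precisely $\beta(\gamma+1) \le \gamma$, which keeps the growth at most $x^\gamma$. Analogous bounds for the higher derivatives follow from the Fa\`a di Bruno formula applied to $\tilde B$ and the $A_n$-bounds on the inner expression.

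For the contraction I would linearize by writing $\tilde B[\cdots\partial_x u\cdots]-\tilde B[\cdots\partial_x v\cdots]$ as the integral of $\partial_y \tilde B$ along the segment joining the two arguments. Using $\partial_y \tilde B(s,y) = O(y^{\beta-1})$ at scale $y \sim x^{\gamma+1}$, together with the factor $\xi_s(x+\mu^\ast_s)^2 \lesssim x^2$ arising from the inner linearization, one obtains $|\mathcal{T}[u] - \mathcal{T}[v]| \lesssim x^{\beta(\gamma+1)}\,\|u-v\|$, whose weighted sup over $x \ge M$ is of order $C\, M^{\beta(\gamma+1)-\gamma}\,\|u-v\|$. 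Under the strict inequality $\gamma > \beta/(1-\beta)$ we have $\beta(\gamma+1)-\gamma < 0$, so this is a strict contraction once $M$ is chosen large enough. The Banach fixed-point theorem then produces a unique $R \in A_\infty(T,M,\gamma)$ solving \rif{equation-rest}, and the iterates $R_n := \mathcal{T}^n[0]$ converge to it; the decomposition $\Gamma(t,x) = 2bX_0 + X_0 x + R(t,x)$ is then exactly \rif{eq-lambda}.

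The asymptotic statement under $\bar B(y)\sim c y^\beta$ is obtained by tracking the leading-order behavior through the iteration. Direct computation of $R_1$ from $R_0 = 0$, combined with the change of variable $\tau = e^{b\beta s}$, yields $R_1(t,x) \sim \omega_\tau\,X_0\,(x+2b/\sigma^2)^{2\beta}$ with $\omega_\tau$ as in the statement. The three-regime dichotomy is governed by two competing exponents, namely $2\beta$ (produced by applying $\tilde B$ to the $X_0$ term) and $\gamma^\ast = \beta/(1-\beta)$ (the fixed point of the map $\gamma \mapsto \beta(\gamma+1)$ produced by iterating on the $\partial_x R$ term). For $\beta<1/2$ one has $2\beta > \gamma^\ast$, every further iteration strictly decreases the exponent, so $R_1$ dictates the leading asymptotic and $c_\tau=\omega_\tau$. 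For $\beta>1/2$ the iteration-produced exponents are increasing; substituting the ansatz $R(\tau,y) \sim A(\tau) y^{\gamma^\ast}$ into \rif{R_eq} reduces the equation to an ODE for $A(\tau)$ whose solution gives the formula $c_\tau = (\gamma^\beta(1-\beta))^{1/(1-\beta)} \omega_\tau^{1/(1-\beta)}$. At the borderline $\beta=1/2$ both exponents collapse to $1$, the iteration becomes self-similar in $y$, and a direct resummation of the resulting geometric-type series yields $(1+\omega_\tau/2)^2 - 1$.

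The main technical obstacle is this last step: propagating the asymptotic equivalences $\sim$ rigorously through the nonlinear operator $\mathcal{T}$, i.e.\ verifying that the remainders $o(y^{2\beta})$ or $o(y^{\gamma^\ast})$ are not amplified by iteration. The weak Lipschitz estimate $\partial_y \tilde B(s,y)=O(y^{\beta-1})$ obtained above is precisely what allows this control, combined with the already obtained uniform bound $R \in A_\infty(T,M,\gamma^\ast)$. The borderline case $\beta=1/2$ remains the most delicate, since one cannot truncate at any finite iteration and must sum the full formal series.
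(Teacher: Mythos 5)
Your proposal follows essentially the same route as the paper's proof: the Banach space $A_\infty(T,M;\gamma)$ from Lemma~\ref{banach-lemma}, the exponent-counting argument $\beta(\gamma+1)<\gamma$ (which is exactly $\gamma>\beta/(1-\beta)$) for both self-mapping and contraction, linearization of $\tilde B$ via $\partial_y\tilde B=O(y^{\beta-1})$, enlargement of $M$ to make the Lipschitz constant strictly less than one, and the change of variable $\tau=e^{b\beta s}$ for the asymptotic part. The only stylistic difference is that you state the Banach fixed-point theorem directly whereas the paper phrases it through the Fr\'echet derivative of the operator $L$ and the bound $\|dL[u]\|<1$, and in the asymptotic part you iterate from $R_0=0$ and track the competing exponents $2\beta$ and $\beta/(1-\beta)$ through the three regimes (which is a somewhat more explicit account of the mechanism), while the paper takes $\tilde R_0(\tau,y)=\eta_\tau y^{\gamma}$ as an ansatz and checks $\tilde R_1\sim\tilde R_0$; both are the same self-consistency check in substance.
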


\begin{remark}
If $ \bar{B} (y) =  c y^\beta$ then using Theorem~\ref{uniqueness-theorem} we have $ \Gamma \in \bar{A}_\infty (T, M; 1 \vee \frac{\beta}{1 - \beta}Ê)$. Hence we may apply the refined statements of Theorem~\ref{theorem-tauberian} and Proposition~\ref{proposition-tauberian} (i.e. (\ref{expans_cdf-2}) and (\ref{tauberian-pososition-refined})) to get a more refined expansion for $ \tilde{B}$. Solving equation (\ref{R_eq}) with the refined version of $ \tilde{B}$ will give a more refined expansion for $R$ and hence a more refined expansion for the cumulative distribution of $X_t$ using Theorem~\ref{theorem-tauberian}. For more details see next section where we study the mean-reverting case in details.
\end{remark}

\subsection{One dimensional stochastic differential equation}
Consider the stochastic differential equation
\[
d X_t = b(X_t) d t + \sigma(X_t) d W_t, ~~~ X_0 > 0,
\]
and define the process
\[
Y_t = \Sigma(X_t), ~~~Ê\textrm{where}~~~ \Sigma(x) :=\left(   \int_0^x \frac{d u}{ 2 \sigma (u) } \right)^2.
\]
Applying Ito's formula to $Y$ we have
\[
d Y_t = \left(   \frac{1}{4} - \frac{\sigma' (X_t)}{2} \sqrt{Y_t} + \frac{ b(X_t) }{ \sigma(X_t) }  \sqrt{Y_t}     \right) dt + \sqrt{Y_t}  d W_t
\]
It's worth emphasizing that from the definition of $ \Sigma$ we have
\[
\sigma (x) = \frac{\sqrt{ \Sigma (x) }}{ \Sigma'(x)}. 
\]
It follows that
\[
 \sigma( \Sigma^{-1} (y) ) = { \Sigma^{-1} }' (y) \sqrt{y},
\]
and
\[
 \sigma' ( \Sigma^{-1} (y) ) =    \frac{  ( \sigma( \Sigma^{-1} (.) ) )'(y)}{ { \Sigma^{-1} }' (y)} = \frac{1}{2 \sqrt{y}} + \frac{ { \Sigma^{-1} }'' (y)   }{{ \Sigma^{-1} }' (y)} Ê \sqrt{y}.
\]
Hence,
\[
d Y_t = B(Y_t) + \sqrt{Y_t} d W_t,
\]
where $B$ is given by
 \[
B:  y \mapsto - \frac{ { \Sigma^{-1} }'' (y)   }{{ \Sigma^{-1} }' (y)} y  +  \frac{ b ( \Sigma^{-1} ( y) ) }{  { \Sigma^{-1} }' (y)}   .
 \]
 We may then apply the result obtained in the previous subsection if $B$ satisfies  Assumption~\ref{assum_1}. It is worth emphasizing that if $ \Sigma \in \bar{A}_2 (M; \lambda)$, then 
\[
 \frac{{ \Sigma^{-1} }'' (y)   }{{ \Sigma^{-1} }' (y)} y \sim ( \lambda - 1) ~~~\textrm{and}~~~~\lim_{y \to \infty}  \frac{ b ( \Sigma^{-1} ( y) ) }{  { \Sigma^{-1} }' (y)    }  \frac{1}{y} =  Ê  \frac{1}{\lambda}  \lim_{x \to \infty} \frac{b (x) }{x}
\]

\begin{remark}[Positive and negative SDE]\label{positive-negative-SDE}
All the results in this paper apply to nonnegative processes given by continuous stochastic differential equations. To apply the results to a more general class of SDEs we proceed as follows:   For $ n > 0$ define $ X^+_n (t) = f_n (X_t) $, where
  \[
  f_n (x) = x ~Êe^{ \frac{-1}{n x} } 1_{x > 0} .
  \]
In particular, $ X^+_n (t) \to X_t 1_{X_t > 0}$. It is also worth emphasizing that $f_n$ is increasing in the positive line, with $ f(0) = 0$. In particular for every $y> 0$ there exists a unique $ x = f^{-1}_n (y)$ such that $ f_n (x) = y$. Now applying Ito's formula to $X_n$ we have
\[
d X^+_n (t) = ( 1 + \frac{1}{n X_t}  )e^{ \frac{-1}{n X_t} }1_{X_t > 0} ( b(X_t) + \sigma(X_t) d W_t) + \frac{1}{2 n^2 X^2_t}  e^{ \frac{-1}{n X_t} } 1_{X_t > 0} \sigma^2 (X_t) d t,
\]
where $ X_t = f^{-1}_n (X_n(t))$.
\end{remark}

\section{Application: Mean-reverting process}\label{sec4} 
\noindent  Consider a diffusion  $(V_t)_{t \geq 0}$ given by the stochastic differential equation
\begin{equation}\label{V_p}
d V_t  =  (a - b V_t ) d t + \sigma  V^p_t d W_t ,Ê~~~V_0 = v_0,
\end{equation}
where $W$ is a Brownian motion, $b,~a,~ v_0, ~ \sigma ~>0$ and $p \in ]0,1[$.  It is now well known (see \cite{Stefano15} and references therein) that  the pathwise uniqueness holds for this equation when $ p \in [\frac{1}{2}, 1[Ê$.  For $ 0 < p < \frac{1}{2} $, the boundary 0 is always  attainable (see \cite{AnersenPiterbarg07}) and a boundary condition at $V=0$ needs to be imposed to make the process unique. To stay in the most general setting, we consider weak solutions of (\ref{V_p}). We study the moment generating function of a transformation of $V$ and we will show that it explodes at a critical moment $ \mu^\ast$. We will then derive an asymptotic expansion for this MGF near $\mu^\ast$ and deduce an asymptotic formula for the cumulative distribution of $V_t$.

Let's first emphasize that when $p \in [\frac{1}{2}, 1[ $,  the (unique) solution admits a smooth density which decays exponentially as $ e^{ - \mu^\ast_t x^{ 2 (1-p)} } $ (see \cite{Stefano15} and \cite{Stefano11}) where $\mu^\ast_t$ is given by (\ref{critical-moment}). In particular, the moment generating function of $V^{2(1-p)}_t $ explodes for all moments larger than  $\mu^\ast_t$.  We draw attention  that in \cite{Stefano11} only the leading term $ e^{ - \mu^\ast_t x^{ 2 (1-p)} } $ of the density expansion is obtained. We will give a sharper asymptotic expansion for this density where the final result will be similar the one derived in Theorem~\ref{theorem-tauberian}.

\begin{theorem}\label{asymp_MGF}
Let $V_t$ be a weak solution to (\ref{V_p}), we have, for $x$ sufficiently large, 
\begin{equation}
\ln \mathbb{E} ~e^{( \mu^\ast_t - \frac{1}{x} ) V^\lambda_t } = \hat{\Delta} (t,x) + \mathcal{O} (x^\frac{-(2 - \lambda)}{\lambda} ),
\end{equation}
where $ \lambda = 2(1-p)$ and  $  \hat{\Delta} (t, x) $ is given by
\begin{eqnarray}\label{hat_delta}
 \hat{\Delta} (t, x)   
&=&
v^\lambda_0  e^{-b \lambda t}Ê{\mu^\ast}(t)^{2}  x  + \frac{\lambda - 1}{\lambda} \ln (  \mu^\ast_t  x) - v^\lambda_0  e^{- b \lambda t}Ê\mu^\ast_t   +
\nonumber\\ && 
v^\lambda_0 \sum_{i \geq 1} \frac{ \nu_i }{ 1 - i \frac{2 - \lambda}{\lambda} }   \omega^i_{ e^{b (\lambda - 1) t} } (  e^{-b \lambda t}Ê\mu^\ast_t (\mu^\ast_t x -1)  )^{ 1 - i \frac{2 - \lambda}{\lambda} } ,
\end{eqnarray}
with
\[
 \omega(\tau) = \frac{a \lambda}{ b( \lambda - 1) v_0}  (\frac{2b}{\sigma^2 \lambda}Ê)^\frac{2-\lambda  }{\lambda}     \int_1^\tau ( 1 -   s^\frac{-\lambda}{\lambda - 1} )^\frac{\lambda -2 }{\lambda}
  ds,
 \]
 and
\begin{equation}
\left\{ 
\begin{array}{ll}
 \nu_1 =     \frac{2( \lambda-1)}{\lambda}, ~~~\nu_{i+1} =\frac{\nu_1 }{i+1} ( 1 - (i+1) \frac{2 - \lambda}{\lambda})  \sum_{ k =1}^i C_k (\lambda) \sum_{ \substack{j_1, \dots, j_k   :\\ j_1 + \dots, j_k = i}}
   \nu_{j_1}  \dots  \nu_{j_k}   , \\
 C_k (\lambda) = \frac{   \frac{\lambda -1}{\lambda}  (  \frac{\lambda -1}{\lambda}  -1) \dots (   \frac{\lambda -1}{\lambda}  - k + 1  )     }{k!}  ,
  \end{array}
 \right.
\end{equation}
whenever $ x > \omega^\frac{\lambda}{2 - \lambda}_\tau  $
    \end{theorem}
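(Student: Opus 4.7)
The plan is to apply the machinery of Section~\ref{sec3} to the transformed process $X_t := V_t^\lambda$, with $\lambda = 2(1-p)$. A direct application of It\^o's formula gives
\begin{equation*}
dX_t = \Bigl(\tfrac{\lambda(\lambda-1)}{2}\sigma^2 + \lambda a\, X_t^{(\lambda-1)/\lambda} - \lambda b\, X_t\Bigr)\, dt + \lambda\sigma\sqrt{X_t}\, dW_t,
\end{equation*}
which is an SDE of the form \eqref{NonliearSR} (with the identifications $b \leftrightarrow \lambda b$, $\sigma \leftrightarrow \lambda\sigma$) and $\bar{B}(y) = \lambda a\, y^{(\lambda-1)/\lambda} + O(1)$. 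Assumption~\ref{assum_1} therefore holds with $\beta = (\lambda-1)/\lambda$, and the critical moment is $\mu^*_t = 2b/(\lambda\sigma^2(1 - e^{-\lambda b t}))$, matching the $e^{-b\lambda t}$ factors in $\hat{\Delta}$.

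The first three summands of $\hat{\Delta}$ come directly from the CIR contribution. In the decomposition $\Gamma(t,x) = \mathrm{const} + X_0 x + R(t,x)$ of Proposition~\ref{MFG-to-CD}, the terms $X_0 e^{-b\lambda t}(\mu^*_t)^2 x$ and $-X_0 e^{-b\lambda t}\mu^*_t$ come from \eqref{mgf_cir}, while the logarithmic correction $\tfrac{\lambda-1}{\lambda}\ln(\mu^*_t x)$ reproduces the ``$2a\ln(\mu^*_t/(\mu^*_t-\mu))$'' term of \eqref{mgf_cir} with the effective constant-drift coefficient $\tfrac{\lambda(\lambda-1)}{2}\sigma^2$ and effective diffusion $\lambda\sigma$. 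The remaining series encodes the solution $R$ of the nonlinear integral equation \eqref{equation-rest}.

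To produce this series, I would iterate \eqref{equation-rest}. Lemma~\ref{asymptotic-Btilda} combined with the refined Proposition~\ref{proposition-tauberian} yields $\tilde{B}(s,y) = \lambda a\, y^{(\lambda-1)/\lambda}(1 + o(1))$ with enough regularity to justify the generalized binomial expansion
\begin{equation*}
\tilde{B}\bigl[s,\, \xi_s(x+\mu^*_s)^2(X_0 + \partial_x R(s,x))\bigr] = \lambda a\bigl(\xi_s(x+\mu^*_s)^2 X_0\bigr)^{(\lambda-1)/\lambda} \sum_{k\ge 0} C_k(\lambda)\bigl((\partial_x R)/X_0\bigr)^k,
\end{equation*}
where the $C_k(\lambda)$ are the generalized binomial coefficients $\tbinom{(\lambda-1)/\lambda}{k}$. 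Substituting the ansatz $R/X_0 = \sum_{i\ge 1}\nu_i\,\omega_\tau^{i}\,(\cdot)^{1 - i(2-\lambda)/\lambda}$ on the right, expanding the $k$-th power via a Cauchy product, and matching coefficients at each order in $x$ produces exactly the stated recursion for $\nu_{i+1}$. The change of variables $\tau = e^{b(\lambda-1)t}$ then converts the $s$-integral of $\xi_s(x+\mu^*_s)^2(\cdot)^{(\lambda-1)/\lambda}$ into the displayed $\omega_\tau$, so this step reduces to bookkeeping once the ansatz is verified.

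The main obstacle is the uniform control of the remainder $\mathcal{O}(x^{-(2-\lambda)/\lambda})$. It absorbs two kinds of error: (i) the $o(\Lambda''/{\Lambda'}^2)$ correction in the refined Proposition~\ref{proposition-tauberian}, which after substituting $p^*(x) \sim \mu^*_t - \tilde c_2\, x^{-1/(1+\alpha_2)}$ from Lemma~\ref{lemma-bounds} evaluates to precisely $x^{-(2-\lambda)/\lambda}$; and (ii) the tail of the $\nu_i$ series beyond truncation, whose convergence in the regime $x > \omega_\tau^{\lambda/(2-\lambda)}$ requires a ratio estimate on $\nu_{i+1}/\nu_i$ built from the decay of $C_k(\lambda)$. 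Ensuring that both estimates are uniform in $t \in [0,T]$ and compatible with the Banach-space norm of Lemma~\ref{banach-lemma}, while also keeping the contraction argument of Theorem~\ref{uniqueness-theorem} valid at this higher order of expansion, will be the most delicate step.
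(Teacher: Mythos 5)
Your proof proposal follows exactly the same route as the paper's (very short) proof: transform $V_t^\lambda$ via It\^o into the square-root framework of Section~\ref{sec3}, read off the CIR contribution for the three leading terms together with the logarithmic correction, and iterate the nonlinear integral equation of Theorem~\ref{uniqueness-theorem} (with $\tilde B(t,x)=B(x)\bigl(1+\mathcal O(x^{-1})\bigr)$ supplied by Proposition~\ref{proposition-tauberian}) to build up the $\nu_i$-series --- indeed you spell out more of the mechanics (the generalized-binomial Cauchy product, the time change $\tau=e^{b(\lambda-1)t}$ producing $\omega_\tau$) than the paper does, and neither you nor the paper carries out the remainder estimate or the convergence argument for the series. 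One small inaccuracy to fix: for $p>\tfrac12$, i.e.\ $\lambda<1$, the exponent $(\lambda-1)/\lambda$ is negative and $\bar B(y)$ tends to the nonzero constant $\tfrac{\lambda(\lambda-1)}{2}\sigma^2$, so Assumption~\ref{assum_1} then holds with $\beta=0$ rather than $\beta=(\lambda-1)/\lambda$; this does not affect your iteration (which uses the explicit form of $\bar B$, not the abstract membership in $\bar A_0(M;\beta)$), but the identification $\beta=(\lambda-1)/\lambda$ is valid only for $\lambda\ge1$.
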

 \begin{proof}
   The proof is in line with Theorem~\ref{uniqueness-theorem}. First using Proposition~\ref{proposition-tauberian} we have $\tilde{B} (t, x) = B(x) (1 + \mathcal{O}  (x^{-1}) )$. Next we use the sequence $ \tilde{R}_n$ defined in Theorem~\ref{uniqueness-theorem}; we find that 
   \[
    \tilde{R}_0 (\tau, y) = c_1 y^\frac{2(\lambda-1)}{\lambda} , ~~~~
   \tilde{R}_1 (\tau, y) = \tilde{R}_0 (t, y) + c_2 y^{ 1-\frac{2-\lambda}{\lambda} } + \mathcal{O} (y^{ 1-2\frac{2-\lambda}{\lambda} } ).
   \]
   The next iteration will give $\tilde{R}_2 (t, y) = \tilde{R_1 } (t, y) +  \mathcal{O} (y^{ 1-2\frac{2-\lambda}{\lambda} } )$, and so on \dots
      \end{proof}

\begin{remark}
The first statement of Theorem~\ref{asymp_MGF} tells us  that the moment generating function of the CIR process $X^{CIR,l}$ (which coincides with $X$ when $a=0$) is a good approximation of the moment generating function of $X$ near $\mu^\ast_t$ (hence the cumulative distribution of $X^{CIR,l}$ approximates $X$'s by the Tauberian theorem~\ref{theorem-tauberian}). The error of this approximation $R(t, x)$  is of order $ \mathcal{O} ( x^\frac{2(\lambda - 1)}{\lambda} )$ which is reasonably small for $\lambda < 1$ and allows us to derive an expansion for the cumulative distribution as
\[
 \mathbb{P} (X>x) = e^{ - \Lambda^\ast (x) }  ~x^\frac{-3}{4} (  c(t) + o (1)).
 \]
 Solving the equation for $R$ will allow obtaining two extra terms as in Theorem~\ref{theorem-tauberian}. In the other case (i.e. $\lambda > 1$) solving the equation on $R$ is needed to get a good expansion for the MGF and the cumulative distribution as the "extra" term $R$ is relatively large. 
  \end{remark}
  
\begin{corollary}  
Let $V_t$ be a weak solution to (\ref{V_p}). We have
\begin{equation}
\mathbb{P} (V^\lambda_t \geq x) = e^{ - \hat{\Lambda}^\ast (x)}  \left( \frac{ \sqrt{ { p^\ast}' (x) } }{ p^\ast (x) \sqrt{2 \pi}} - \frac{ 2 + \frac{\alpha}{ (\alpha + 1)^2 } }{  24 \mu^\ast  \sqrt{2 \pi} } ~\frac{1}{x^2 \sqrt{ {p^\ast}'(x) }  }    + o(    x^\frac{-5}{4} + x^\frac{-(2-\lambda)}{2 \lambda}      )   \right),
\end{equation}
 where $\hat{\Lambda}^\ast $ is the Fenchel-Legendre transform of $  \hat{\Lambda} (p) = \hat{\Delta} (t, \frac{1}{\mu^\ast -p}) $, where $ \hat{\Delta} $ is defined by (\ref{hat_delta}) and $p^\ast(x) =\partial_x \hat{\Lambda}^\ast  (x)$
  \end{corollary}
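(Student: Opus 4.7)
The plan is to apply the refined Tauberian expansion \eqref{expans_cdf-2} of Theorem~\ref{theorem-tauberian} to the random variable $Z := V^\lambda_t$, feeding in as MGF asymptotics the sharp formula provided by Theorem~\ref{asymp_MGF}. The crucial starting point is
\[
\Lambda(\mu^\ast_t - 1/x) \;=\; \hat{\Delta}(t,x) + \mathcal{O}\bigl(x^{-(2-\lambda)/\lambda}\bigr),
\]
where $\Lambda(\mu) := \ln \mathbb{E} e^{\mu Z}$ and $\hat{\Delta}$ is dominated near infinity by its linear term $v_0^\lambda e^{-b\lambda t}(\mu^\ast_t)^2 x$, all other summands in \eqref{hat_delta} being sublinear in $x$.

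First, I would verify Assumption~\ref{hyp_laregdev}. Theorem~\ref{asymp_MGF} yields the two-sided bounds $c_1 x \le \Lambda(\mu^\ast_t - 1/x) \le c_2 x$ for $x$ large, so $\alpha_1 = \alpha_2 = 1$ and the compatibility constraint $\alpha_2(1 + 1/(1+\alpha_2)) < 2 \alpha_1$ collapses to $3/2 < 2$. I would then show $x \mapsto \Lambda(\mu^\ast_t - 1/x) \in \bar{A}_3(M; 1)$ by differentiating \eqref{hat_delta} three times termwise: each summand is a smooth power of a quantity comparable to $x$, so successive derivatives pull down matching negative powers, and the $\mathcal{O}(x^{-(2-\lambda)/\lambda})$ remainder stays of lower order than $x$ and hence does not alter the index. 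Finally, the $\limsup = \liminf$ condition required by Theorem~\ref{theorem-tauberian} is forced by the two CIR squeeze bounds constructed in the proof of Theorem~\ref{moment-explosion} sharing the same leading linear term in $\Lambda$.

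With these inputs, \eqref{expans_cdf-2} applied with $\alpha = 1$ gives the expansion of $\mathbb{P}(V^\lambda_t \ge x)$ in terms of the Legendre pair built from the \emph{true} $\Lambda$; call it $(\Lambda^\ast, P^\ast)$. To reach the statement one must substitute the explicit Legendre pair $(\hat{\Lambda}^\ast, p^\ast)$ built from $\hat{\Lambda}(p) := \hat{\Delta}(t, 1/(\mu^\ast_t - p))$, as in the corollary. For this I would use the first-order Legendre sensitivity
\[
\Lambda^\ast(x) - \hat{\Lambda}^\ast(x) \;=\; -(\Lambda - \hat{\Lambda})(p^\ast(x)) + o(\Lambda - \hat{\Lambda}),
\]
together with the near-critical estimate $p^\ast(x) = \mu^\ast_t - \mathcal{O}(x^{-1/2})$ coming from $\hat{\Lambda}'(p) \sim v_0^\lambda e^{-b\lambda t}(\mu^\ast_t)^2 /(\mu^\ast_t - p)^2$ as $p \to \mu^\ast_t$. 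The MGF remainder of order $y^{-(2-\lambda)/\lambda}$ evaluated at $y \sim x^{1/2}$ then transfers to an error of order $x^{-(2-\lambda)/(2\lambda)}$ inside the prefactor of $e^{-\hat{\Lambda}^\ast(x)}$, while the intrinsic $o(1/(x^2 \sqrt{p^{\ast\prime}(x)})) = o(x^{-5/4})$ of \eqref{expans_cdf-2} accounts for the other half of the stated $o$-term.

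The main obstacle is precisely this stability analysis of the Legendre transform: I must show that $P^\ast$ and $P^{\ast\prime}$ agree with $p^\ast$ and $p^{\ast\prime}$ to the accuracy required for the prefactor $\sqrt{p^{\ast\prime}(x)}/p^\ast(x)$ to be preserved. I would obtain this by implicit differentiation of $\Lambda'(P^\ast(x)) = x$ and $\hat{\Lambda}'(p^\ast(x)) = x$, exploiting the strict convexity of $\Lambda$ delivered by Lemma~\ref{lemma-bounds} via the two-sided power bounds, and using the $\bar{A}_3(M; 1)$-smoothness to propagate the perturbation through two derivatives. The clean cancellation that lets everything fit inside $o(x^{-5/4} + x^{-(2-\lambda)/(2\lambda)})$ is what makes the corollary work.
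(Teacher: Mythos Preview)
Your approach is essentially the paper's: the paper's entire proof is the single sentence ``The proof follows from a direct application of Theorem~\ref{theorem-tauberian}.'' You are carrying out precisely that application, with the added care of verifying Assumption~\ref{hyp_laregdev} and the $\bar A_3$ hypothesis, and of tracking how the replacement of the true $\Lambda$ by the explicit $\hat\Lambda$ propagates through the Legendre transform to produce the two error scales $x^{-5/4}$ and $x^{-(2-\lambda)/(2\lambda)}$; the paper simply asserts the result and leaves all of this implicit.
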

   
   \begin{proof}
   The proof follows from a direct application of Theorem~\ref{theorem-tauberian}.
   \end{proof}

\appendix

\section{The inverse function theorem}
\noindent Let us first recall briefly some facts from the theory of inverse and implicit functions in Banach spaces. 
\begin{definition}[Fr\'echet derivative]
Let $V$ and $W$ be normed spaces, and $ U   $  be a non-empty open subset of $V$. A function $f$ is called FrŽchet differentiable at $ x \in U$ if there exists a bounded linear operator $ df[a] : ~  V \longrightarrow W $ such that
\[
\lim_{ h \to 0} \frac{ f(a+h) - f(a) - df[a] h  }{\|ÊhÊ\|} = 0.
\]
\end{definition}

\begin{definition}[contraction and fixed point]
 Let $ (M, d)$ be a metric space. A mapping $f:~M \longrightarrow M$ is a contraction if there exists a real number $ k \in [0,1[$ such that
\[
d (f(x), f(y)) \le k d (x,y) , ~~~\text{for all } ~x, y \in M. 
\]
A fixed point of $f$ is $ x \in M$ such  that $ f(x) = x$. 
\end{definition}

\begin{theorem}[Contraction mapping]
If $ f$ is a contraction on a complete metric space $ (M, d)$, then $ f$ has precisely one fixed point in $M$.
\end{theorem}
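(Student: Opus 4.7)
The plan is to prove uniqueness and existence separately, with the bulk of the work going into constructing the fixed point via Picard iteration. For uniqueness, suppose $x$ and $y$ are both fixed points of $f$. Then $d(x,y) = d(f(x), f(y)) \le k\, d(x,y)$, and since $k < 1$ this forces $(1-k)d(x,y) \le 0$, hence $d(x,y) = 0$ and $x = y$. This is immediate and sets aside the uniqueness claim cleanly.

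For existence, I would fix an arbitrary $x_0 \in M$ and define the iterated sequence $x_{n+1} := f(x_n)$ for $n \ge 0$. The key estimate is obtained by iterating the contraction bound: $d(x_{n+1}, x_n) = d(f(x_n), f(x_{n-1})) \le k\, d(x_n, x_{n-1})$, so by induction $d(x_{n+1}, x_n) \le k^n d(x_1, x_0)$. Then for $m > n$, the triangle inequality yields
\[
d(x_m, x_n) \le \sum_{j=n}^{m-1} d(x_{j+1}, x_j) \le d(x_1, x_0) \sum_{j=n}^{m-1} k^j \le \frac{k^n}{1-k}\, d(x_1, x_0),
\]
which tends to $0$ as $n \to \infty$ uniformly in $m$. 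Hence $(x_n)$ is a Cauchy sequence in $M$.

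Since $(M,d)$ is complete, there exists $x^* \in M$ with $x_n \to x^*$. To see that $x^*$ is a fixed point, note that every contraction is Lipschitz continuous (with constant $k$), hence continuous; therefore $f(x_n) \to f(x^*)$. But $f(x_n) = x_{n+1} \to x^*$ as well, so by uniqueness of limits $f(x^*) = x^*$. Combined with the uniqueness argument above, this gives the theorem.

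I do not anticipate any real obstacle: the argument is a classical one and each step is routine, the only point requiring mild care being the verification that the telescoping estimate produces a genuinely Cauchy sequence (which is why the hypothesis $k < 1$ is essential, as opposed to merely $k \le 1$). No additional structure from the surrounding paper is used.
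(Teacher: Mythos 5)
Your proof is correct and is the classical Banach fixed point argument: uniqueness via the contraction estimate directly, existence via Picard iteration, the geometric-series telescoping bound to get a Cauchy sequence, completeness for a limit, and continuity of $f$ to identify the limit as a fixed point. The paper itself states this theorem without proof, as a recalled background fact in its preliminaries on inverse functions in Banach spaces, so there is no in-paper argument to compare against; your write-up supplies exactly the standard proof that the paper implicitly invokes.
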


\subsection{Proof of Lemma~\ref{banach-lemma}}
We can easily check that $(X, \|Ê\|) $ is a normed  vector space.  We only need to prove that $A_\infty(T, M, \gamma)$ is complete with respect to  the norm (\ref{normX}). Let $ (f_n) $  be a Cauchy-Sequence in $ A_\infty(T, M, \gamma)$. From the definition of $ A_\infty(T, M, \gamma)$,  for each $k \geq 0$ the sequence $ g^k_n := x^{\gamma+k} \partial^k f_n (t,x)$ is  Cauchy-Sequence in the space $ C_b ( [0,T] \times [M, \infty[ )$ of bounded continuous functions. As $ (C_b ( [0,T] \times [M, \infty[ ), \|Ê\|_\infty )$ is a Banach space,  the Cauchy-sequence $ g^{k}_n$ converges to $g^{k}$ in $ C_b ( [0,T] \times [M, \infty[ )$. On the other hand, noticing that $ \partial_x g^{k}_n = \frac{\gamma + k}{x} g^{k}_n + \frac{1}{x} g^{k + 1}_n  $, we deduce that $ \partial_x g^{k}_n$ converges uniformly to $ \frac{\gamma + k}{x} g^{k} + \frac{1}{x} g^{k + 1}$ in $ C_b ( [0,T] \times [x_0, \infty[ )$. Iterating this many times we can easily prove that $ g^k = x^{ \gamma + k} \partial^k f$, where $ f = x^{ - \gamma} g^0$. It follows that for each $ k \geq 0$, $ \sup_{ (t, x) \in [0,T] \times [M, \infty[  } |   x^{\gamma + k} \partial_k( f_n (t,k) - f (t,k)) |$ converges uniformly to 0. We conclude using the Banach space $\mathcal{L}^\infty (\mathbb{N})$ of all bounded sequences, with norm $ \|Ê\| = \sum_{k \geq 0} | x_k|$ that $ f_n$ converges to $f$ in $A_\infty(T, M, \gamma)$

\subsection{Proof of Theorem~\ref{uniqueness-theorem}}
As  $ \tilde{B} \in A_\infty (T, M; \beta) $  we have, for any $ s \le T, ~~ z > M$,
\begin{equation}\label{derivative-tilde-B-bound}
| \frac{ \partial^k \tilde{B} (s,z)}{ \partial x^k} | \le \|  \tilde{B}Ê \| z^{\beta - k}. 
\end{equation}
Similarly if $ u \in A_\infty (T, M; \gamma)$, where $ \gamma > 1 \vee \frac{1}{1 - \beta} $, then we have $ ( s, x) \mapsto  \tilde{B}  (  s, \xi_s ( x + \mu^\ast_s )^2 ( x_0 + \frac{\partial_u }{\partial_x} (s,x)Ê)) ~ \in A_\infty ( T, M ; \beta ( \gamma + 1))$.   Hence the operator 
\[
L: ~u \mapsto L (u) (t, x) :=  \int_0^t  \frac{ (x + 2 b ) \mu^\ast_sÊ}{  x+ \mu^\ast_s     } \tilde{B} \left[ s,\xi_s  (x+  \mu^\ast_s   )^2  (x_0 +  \partial_x u (s,x)    ) \right] d s 
\]
maps $A_\infty ( T, M ; \gamma ))$ into $A_\infty ( T, M ; \beta ( \gamma + 1))$. We emphasize that the hypothesis $ \gamma > 1 \vee \frac{ \beta}{1- \beta}$ implies that $ \beta ( \gamma + 1) < \gamma$. In particular, we can easily prove that $ L$ maps the unit ball of $ X_\gamma$ into itself for $M$ sufficiently large. Furthermore,  $ L: ~X_\gamma \longrightarrow X_\gamma$ is differentiable with
\[
dL[u] (h) (t,x) =  \int_0^t    D (s,x) h(s,x) d s,
\]
where
\[
D(s,x) := \xi_s  (x+  \mu^\ast_s   )^2  \frac{ (x + 2 b ) \mu^\ast_sÊ}{  x+ \mu^\ast_s     }  \partial_z \tilde{B} \left[ \xi_s  (x+  \mu^\ast_s   )^2  (x_0 +  \partial_x u (s,x)  ) \right] .
\]
In particular, using (\ref{derivative-tilde-B-bound}) we  have $ D \in A_\infty ( T, M ; \beta ( \gamma + 1) + 1 - \gamma) $.  We derive the following upper bound for  $|x^{- \gamma} dL[u] (h) (t,x) |$:
\[
|x^{- \gamma} dL[u] (h) (t,x) |  \le  x^{ - 1} \sup_{ s \le T }  |D(s,x)| ~ \sup_{ s \le T }  x^{ - \gamma + 1} |\partial_x h (s,x) | \le   x^{ - 1} \sup_{ s \le T }  |D(s,x)|  \|ÊhÊ\|,   
\]
We emphasize that the assumption $ \gamma > 1 \vee \frac{ \beta}{1- \beta}$ implies that  $\beta ( \gamma + 1) + 1 - \gamma < 1$. In particular $x^{ - 1} \sup_{ s \le T }  D(s,x) \to 0 $ as $x \to \infty$. Hence, for $ M$ sufficiently large we have
\[
\|Êd L[u] (h) Ê\| \le \kappa \|ÊhÊ\|, ~~~~\textrm{where}~~~ \kappa < 1.
\]
It follows that 
\[
\| ÊdL[u]  ÊÊÊ\| = \sup_{  \|ÊÊÊh\| = 1 } \|ÊÊdL[u]  (h) ÊÊ\| \le k < 1,
\]
so that for every $ u_1, u_2 \in  \bar{S}_1 (0) $ (the unit ball of $A_\infty (T, M; \gamma)$) we have
\[
 \|Ê  L(u_1) - L(u_2)   \| \le \sup_{ u\in \bar{B}_1 (0) } \|ÊÊd L[u]Ê\|  \|ÊÊu_1 - u_2ÊÊ\| \le k \|ÊÊu_1 - u_2\|.
\]
That is $L$ is a contraction on $\bar{S}_1 (0)$. Thus, $L$ has precisely one fixed point on $\bar{S}_1 (0)$. Now we can prove that if $ u \in A_\infty ( T, M ; \gamma) $ then $ u $ belongs to the unit ball of $ u \in A_\infty ( T, M ; \gamma + \epsilon) $, with $ \epsilon > 0$. Thus   (\ref{R_eq}) has precisely one solution on $ A_\infty ( T, M ; \gamma) $.

Now assume $\tilde{B} (x) = c x^\beta$.  Equation (\ref{R_eq}) takes the form:
\[
R(t, x) = c x^{2 \beta}_0  (x + 2 \frac{2 b }{\sigma^2}Ê)^{ 2 \beta} \int_0^t ~ e^{ b \beta s} { \mu^\ast_t }^{ 1 - 2 \beta} 
( 1 - \frac{e^{- b s}Ê\mu^\ast_s}{ x + \mu^\ast_s }Ê) ( 1 + \frac{2 b }{\sigma^2 x + 2 b }  \frac{e^{- b s}}{1 - e^{- b s} }Ê)^{ 2 \beta} ( 1 + \frac{1}{x_0} \partial_x R(s, x)Ê)^\beta d s.
\]
Hence using the variable substitution $ u = e^{ b \beta  s} $  we find that the function $\tilde{R} (\tau,y) := \frac{1}{x_0} R ( \frac{\ln(\tau)}{b \beta},  y - \frac{2b}{\sigma^2 }    )  $ will be given as follows:
\[
\tilde{R} (\tau, y ) = y^{ 2 \beta} \int_1^\tau \omega'_s ( 1 + \partial_x \tilde{R}(s, x)Ê)^\beta d s ~ \left( 1 +  \mathcal{O} ( y^{-1}) \right),
\]
where $ \omega$ is given by
\begin{equation}
\omega_\tau = c \frac{ x^{2 \beta}_0 }{ b \beta }Ê (\frac{2 b}{\sigma^2}Ê)^{ 1 - 2 \beta} \int_1^\tau  ( 1 - s^\frac{-1}{\beta}Ê)^{ 2 \beta - 1} d s
\end{equation}
Consider the sequence $ \tilde{R}_n$ defined by 
\[
 \tilde{R}_0 (t, y) = \eta_t y^\gamma, ~~~~~\tilde{R}_{ n+1} (t, y) = y^{ 2 \gamma}  \int_1^\tau \omega'_s ( 1 + \partial_x \tilde{R}_n (s, x)Ê)^\beta d s,
 \]
where 
\[
\eta_t = \omega_t 1_{ \beta < \frac{1}{2}   } +    (( 1 + \frac{1}{2}  \omega_t  )^2 - 1) 1_{\beta = \frac{1}{2}Ê}  +  ( \gamma^\beta (1 - \beta) )^{ \frac{1}{1-\beta} 1 Ê} 1_{ \beta> \frac{1}{2} } \omega^\frac{1}{1 - \beta}  1_{ \beta > \frac{1}{2}Ê}
\]
We find that $ \tilde{R}_1 (t, y) \sim \tilde{R}_0 (t, y)$, and hence for every $ n \geq 1$, $ \tilde{R}_n (t, y) \sim \tilde{R}_0 (t, y)$. Thus,
\[
\tilde{R} (t, y) \sim c(t) (t) y^\gamma
\]
\hfill\(\Box\)

\section{Proof of the Tauberian results}

\subsection{Proof of Lemma~\ref{lemma-bounds}}
The second derivative of $ \Lambda (p) := \ln \mathbb{E} ( e^{ p X} 1_{ X > 0}) $ is given by
\[
\Lambda'' (p) = (\mathbb{E} ( e^{ p X} 1_{ X > 0}) )^{ -2} \left[ \mathbb{E} ( X^2 e^{ p X} 1_{ X > 0}) \mathbb{E} (   e^{ p X} 1_{ X > 0}) - ( \mathbb{E} ( X e^{ p X} 1_{ X > 0}))^2 \right] 
\]
Now writing $X e^{ p X}1_{ X > 0}$ as $X e^{ p/2  X} ~e^{ p/2  X} 1_{ X > 0}$ we have using H\"older inequality
\[ 
 \mathbb{E} ( X e^{ p X}1_{ X > 0})  \le \left(  \mathbb{E} ( X^2 e^{ p X}1_{ X > 0})  \right)^\frac{1}{2} \left(  \mathbb{E} (   e^{ p X}1_{ X > 0})  \right)^\frac{1}{2} .
 \]
 Hence
 \[
 \Lambda'' (p) >0.
 \]
 Thus $\Lambda$ is convex. In particular $ \Lambda^\ast (x) = p^\ast (x) x - \Lambda (p^\ast(x))$, where $p^\ast (x)$ is the unique solution to $ x = \Lambda' (p^\ast (x))$. Now since 
 \[
 \frac{c_1}{ (\mu^\ast - p)^{ \alpha_1} }  \le \Lambda(p) \le \frac{c_2}{ (\mu^\ast - p)^{ \alpha_2} },
\]
we have
\[
\mu^\ast - \tilde{c}_2 x^\frac{\alpha_2}{\alpha_2 + 1}Ê = \sup_{p > 0} ( p x - \frac{c_1}{ (\mu^\ast - p)^{ \alpha_2} }) \le \Lambda^\ast (x) \le \sup_{p > 0} ( p x - \frac{c_2}{ (\mu^\ast - p)^{ \alpha_1} }) = \mu^\ast - \tilde{c}_1 x^\frac{\alpha_1}{\alpha_1 + 1},
\]
where $ \tilde{c}_i = (c_i\alpha_i )^\frac{1}{\alpha_i + 1}  ( 1 + \alpha_i)/\alpha_iÊÊ$. First from the monotonicity and the convexity of $\Lambda$ we have, $ \Lambda^\ast(x) = p^\ast(x) x - \Lambda(p^\ast(x))$, where $ p^\ast(x)$ is the unique solution to $ x = \Lambda'(p)$. Hence
\begin{equation}\label{bound-lmbda-start}
\tilde{c}_1 x^\frac{\alpha_1}{\alpha_1 + 1} \le (\mu^\ast - p^\ast(x) ) x + \Lambda( p^\ast(x))  \le \tilde{c}_2 x^\frac{\alpha_2}{\alpha_2 + 1}Ê.
\end{equation}
It follows that
\[
\mu^\ast - p^\ast(x)  \le \tilde{c}_2 x^\frac{-1}{\alpha_2 + 1}
\]
Let's now assume there exists $x > 0$ such that
\[
\mu^\ast - p^\ast(x)  <  (c_1 / \tilde{c}_2)^\frac{1}{\alpha_1} x^\frac{ - \alpha_2 / \alpha_1 }{1 + \alpha_2} .
\] 
For this $x$ we have
\[
 (\mu^\ast - p^\ast(x) ) x + \Lambda( p^\ast(x))  > \Lambda( p^\ast(x)) \geq \Lambda_1 (p^\ast (x)) 
 >
   \Lambda_1 ( \mu^\ast - (c_1 / \tilde{c}_2)^\frac{1}{\alpha_1} x^\frac{ - \alpha_2 / \alpha_1 }{1 + \alpha_2}   ) =\tilde{c}_2 x^\frac{\alpha_2}{\alpha_2 + 1} .
\]
This clearly  contradicts  (\ref{bound-lmbda-start}). Hence,
\[
\forall x > 0, ~~~~\mu^\ast - p^\ast(x)  \geq  (c_1 / \tilde{c}_2)^\frac{1}{\alpha_1} x^\frac{ - \alpha_2 / \alpha_1 }{1 + \alpha_2}.
\]
Thus
\begin{equation}\label{bound-pAst}
\forall x > 0, ~~~p^\ast (x) \in \left[    \mu^\ast - \tilde{c}_2 x^\frac{-1}{1 +\alpha_2 }, \mu^\ast - (c_1 / \tilde{c}_2)^\frac{1}{\alpha_1} x^\frac{ - \alpha_2 / \alpha_1 }{1 + \alpha_2} \right].
\end{equation}
We can easily see that for every $ p > 0$
\[
\frac{ d_1}{ ( \mu^\ast - p)^{\beta_1} }  \le \Lambda' (p) \le \frac{ d_2}{ ( \mu^\ast - p)^{\beta_2} } ,
\]
  where $ d_1, ~d_2 > 0$ and $ \beta_1$ and $\beta_2$ are given by
  \[
  \beta_1 = \frac{\alpha_1}{\alpha_2} ( 1 + \alpha_2)  , ~~~\textrm{and}~~~\beta_2 = 1 + \alpha_2.
  \] 
  Differentiating both sides of $(x = \Lambda' (p^\ast (x))$ we have
  \[
 {p^\ast}' (x) = 1/\Lambda'' (p^\ast (x)).
  \]
We will now derive similar results for $ \Lambda''$.  We consider the function $ h (x) = \sup_{ p \in [0, \mu^\ast[} \{Ê x p - \Lambda' (p)\}$. For every $ x > 0$, there exists $ p (x)$ (not necessarily unique as $ \Lambda'$ may not be convex) such that $ h   (x) = x p(x) - \Lambda'(p(x))$. By proceeding as for $ \Lambda$ we can prove that 
 $ p (x)  \in [ p_1 (x) , p_2 (x)Ê]$,  where
\[
p_1 (x) =  \mu^\ast - \tilde{d}_2 x^\frac{-1}{1 + \beta_2}      , ~~~\textrm{and}~~~~ p_2 (x) =  \mu^\ast - (d_1 / \tilde{d}_2)^\frac{1}{\beta_1} x^\frac{ - \beta_2 / \beta_1 }{1 + \beta_2} ,
\]
where $ \tilde{d}_i = (d_i\beta_i )^\frac{1}{\beta_i + 1}  ( 1 + \beta_1)/\beta_iÊÊ$. Let $ p^\ast (x) = \sup_{p \in [0, \mu^\ast[ } ( x = \Lambda'' (p) )$ and $ p_\ast (x) = \inf_{p \in [0, \mu^\ast[ } ( x = \Lambda'' (p) )$. In particular we have $[ p_\ast (x), p^\ast(x) ] \subset [ p_1 (x) , p_2 (x)Ê]$. On the other hand, we have  $ \Lambda'' (p) \le  x$ (resp. $ \Lambda'' (p) \geq  x$) whenever $ p \le p_\ast (x)$  (resp. $ p \geq p^\ast (x)$). Hence $ \Lambda'' (p_1 (x)) \geq x$ and $ \Lambda'' (p_2 (x)) \le x$. i.e.
\[
\forall x > 0, ~~~ \Lambda'' (\mu^\ast - \tilde{d}_2 x^\frac{-1}{1 + \beta_2} ) \le x
, ~~~
\textrm{and} ~~~ 
\Lambda'' (\mu^\ast - (d_1 / \tilde{d}_2)^\frac{1}{\beta_1} x^\frac{ - \beta_2 / \beta_1 }{1 + \beta_2} ) \geq x.
\] 
Thus,
\begin{equation}\label{bound-Lambda2}
\forall p > 0, ~~~\Lambda'' (p) \in \left[ 
( (\mu^\ast - p)   (\tilde{d}_2/d_1)^\frac{1}{\beta_1}      )^{-\frac{\beta_1}{\beta_2} (1 + \beta_2]}, 
 ( (\mu^\ast - p)/\tilde{d}_2)^{-[1 + \beta_2]} \right].
\end{equation}
 Using (\ref{bound-pAst}) and (\ref{bound-Lambda2}) and the fact that $ {p^\ast}' (x) = 1/\Lambda''(p^\ast (x))$, we derive the following bounds for $ {p^\ast}' (x)$:
 \[
 {p^\ast}' (x) \in \left[  
 m_1 x^{ - \frac{\alpha_2}{\alpha_1}( 1 +  \frac{1 }{1 + \alpha2} )Ê},
 m_2 x^{ - \frac{\alpha_1}{\alpha_2} (1 +  \frac{1 }{1 + \alpha2} )Ê},
 \right]
 \]
 with positive constant $ m_1$ and $ m_2$. Hence, if $ \alpha_2 ( 1 +  \frac{1 }{1 + \alpha2} ) < 2 \alpha_1 $ then $ \lim_{ x \to \infty} x^2 {p^\ast}' (x) = \infty$. \hfill\(\Box\)

 \subsection{Proof of Lemma~\ref{lemma-tauberian}}
\noindent Let's denote by
 \begin{eqnarray*}
 I (x) &=&   \int_{x^{\beta-1}}^\infty   g (x z) ~ e^{ \chi_x (z)} d z ,
 \end{eqnarray*}
 where
 \[
\chi_x (z) := (p^\ast(x) -p^\ast(zx) ) zx  +(\Lambda  (p^\ast(zx))- \Lambda  (p^\ast(x)) .
\]
The idea is to approximate $ \int_{x^{\beta-1}}^\infty g (x z)  ~ e^{ \chi_x (z)} d z $ by $ \int_{ 1 - h(x)}^{ 1 + h (x)}  g (x z)  ~ e^{ \chi_x (z)} d z $ for some $h$ satisfying \\ $\int_{x^{\beta-1}}^\infty g (x z)  ~ e^{ \chi_x (z)} 1_{ | z-1|>h} d z  \sim 0$.  For this we shall use the fact that ${\chi_x}' (z) = x( p^\ast (x) - p^\ast ( zx)) $, which means (as $ p^\ast$ is increasing which is a consequence of the convexity of $\Lambda$) that $\chi_x (.)$ is decreasing in $ ]1, \infty[$ and increasing on $ ]0, 1[$. Let's first use the following representation of $ \chi_x (z)$, where we mainly use the fact that $ {\Lambda^\ast}'' (x) = { p^\ast} (x)$, 
 \begin{eqnarray}\label{equi_chiX}
 \chi_x (z) &=& p^\ast (x) ( z x - x) + \Lambda^\ast ( x) - \Lambda^\ast (zx) = -\int_x^{zx} ( p^\ast (u) - p^\ast (x)) d z = - \int_x^{ zx } \int_x^u { p^\ast}' (u) d u
 \nonumber\\ &=& - x^2 \int_0^{ |z-1|} \int_0^u {p^\ast}' ( ( 1 + \sign(z-1) v ) x) d v d u \sim_{ z \to 1} - x^2 { p^\ast}' (x) \frac{ ( z-1)^2}{2}.
 \end{eqnarray}
On the other hand we have from Lemma~\ref{lemma-bounds}  
\[
{p^\ast}' (u) \geq c u^{-2 +\gamma} , ~~~Ê\textrm{where}~~~\gamma = \frac{\alpha_2}{\alpha_1} ( 1 + \frac{1}{1+ \alpha_2}Ê) -2 > 0.Ê
\] 
Let's choose $ h$ as:
\[
h \equiv h(x) := x^{ -1 +\frac{\gamma}{4}} / \sqrt{   {p^\ast}' (x)  }.  Ê
\]
In particular  $h(x) \le \frac{1}{c}  x^\frac{-\gamma}{2}Ê \to_{ x \to \infty} 0 $, and $ h^2 (x) x^2 { p^\ast}' (x)  = x^\frac{\gamma}{2}Ê\to_{ x \to \infty} \infty$. Now for $ z > 1+ h$, we have
\begin{eqnarray*}
e^{\chi_x (z) }  &= &  e^{\chi_x (1+h)  }~e^{ \chi_x (z) - \chi_x (1+h)     } = e^{\chi_x (1+h)  } ~Êe^{  -\int_{ (1+h)x }^{zx} ( p^\ast (u) - p^\ast (x)) d z  } 
\\
&\le& e^{\chi_x (1+h)  } ~Êe^{  -\int_{ (1+h)x }^{zx} ( p^\ast ((1+h) x) - p^\ast (x)) d z  } = e^{\chi_x (1+h)  } e^{ - x  [p^\ast ( (1+h)x) - p^\ast (x)]  (z-h)  }.
\end{eqnarray*}
It follows that for any function with (at most) polynomial growth $g$ we have
\[
\int_{1+h}^\infty e^{\chi_x (z) }  \le e^{ \chi_x (1+h) } ~\int_{ (1+h) }^\infty g(z) e^{ - x  [p^\ast ( (1+h)x) - p^\ast (x)]  (z-h)  } d z \le Q(x) e^{ \chi_x (1+h) } \le Q(x) e^{ - \frac{1}{2} x^\frac{\gamma}{2}ÊÊ},
\]
where $ Q$ is some polynomial function. A similar result may be easily obtained for $ \int_{ x^{- \beta} }^{ 1 - h}$ using the fact that $ \chi_x (.) $ is decreasing on $ ]0, 1[$. This proves that
\[
\int_{x^{\beta-1}}^\infty g (x z)   e^{ \chi_x (z)} d z ~ \sim~ \int_{ 1 - h(x)}^{ 1 + h (x)}  g (x z)  ~ e^{ \chi_x (z)} d z.
\]
The integral in the right-hand side satisfies:
\begin{eqnarray*}
\int_{ 1 - h(x)}^{ 1 + h (x)}  g (x z)  ~e^{ \chi_x (z)} \ d z  &\sim& g(x) \int_{ 1 - h(x)}^{ 1 + h (x)}     e^{ \chi_x (z)} d z
=
g(x) \int_0^h (e^{  \chi_x (1+z)    } + e^{  \chi_x (1-z)    }) d z 
\nonumber\\ &=& -g (x) 
(\int_0^{ \sqrt{- 2 \chi_x (1+h) } }
\frac{u ~e^{ - \frac{1}{2} u^2Ê} du }{   { \nu^+_x }'(  { \nu^+_x }^{-1} ( - \frac{u^2}{2}Ê)   )    } 
+\int_0^{ \sqrt{- 2 \chi_x (1-h) } }
\frac{u ~e^{ - \frac{1}{2} u^2Ê} du }{   { \nu^-_x }'(  { \nu^-_x }^{-1} ( - \frac{u^2}{2}Ê)   )    } ),
\end{eqnarray*}
where we have used the variable substitutions  $ u = \sqrt{ - 2 \nu^\pm_x (z)}$, with $ \nu^\pm_x (z) := \chi_x (1 \pm z)$. Now using (\ref{equi_chiX}) we have  
\[
\nu^\pm_x (z) = - x^2 \int_0^{ z} \int_0^u {p^\ast}' ( ( 1  \pm  v ) x) d v d u \sim_{ z \to 0} - x^2 { p^\ast}' (x) \frac{ z^2}{2},
\]
and
\[
{\nu^\pm_x}' (z) = - x^2  \int_0^z {p^\ast}' ( ( 1  \pm  v ) x) d v   \sim_{ z \to 0} - x^2 { p^\ast}' (x) z.
\]
In particular $ {\nu^\pm_x}^{-1} ( - \frac{u^2}{2}Ê) \sim \frac{u}{   x \sqrt{  {p^\ast}' (x)  }   }   $ and ${ \nu^+_x }'(  { \nu^+_x }^{-1} ( - \frac{u^2}{2}Ê)   )  \sim  -x \sqrt{  {p^\ast}' (x)  } ~u$. On the other hand, we have
\[
\sqrt{ -2 \chi_x (1 \pm h)}  \sim x \sqrt{  {p^\ast}' (x)  } h = x^\frac{\gamma}{4}  \to \infty.
\]
Thus,
\[
\int_{ 1 - h(x)}^{ 1 + h (x)}  g (x z)  ~e^{ \chi_x (z)} \ d z ~\sim_{ x \to \infty} ~ 2 g (x)  x \sqrt{  {p^\ast}' (x)  } \int_0^\infty e^{ - \frac{1}{2} u^2 } d u =\frac{ \sqrt{ 2 \pi }Êg (x) }{ x \sqrt{  {p^\ast}' (x)  } }.
\]

If $ x \mapsto \Lambda(\mu^\ast - \frac{1}{x}Ê) \in \bar{A}_3 (T, M; \alpha)$, then we find that $ {p^\ast}' \in \bar{A}_1 (T, M;  \frac{-(\alpha + 2)}{\alpha + 1}Ê)$ and  in particular that 
\[
 {p^\ast}'' (x) \sim \frac{-(\alpha + 2)}{\alpha + 1}Ê  {p^\ast}' (x).
\]
We then derive (\ref{taub_res-refined}) by applying first order Taylor expansion to $ {p^\ast}'$.
\hfill\(\Box\)

 \subsection{Proof of Theorem~\ref{theorem-tauberian}}
  We  prove inequalities (\ref{lim_sup2}) and (\ref{lim_inf2}) by contradiction argument. Let's suppose that there exists $c<0 $ such that 
\[
\limsup_{x \to \infty} Ê\left(   \ln \mathbb{P} (X \geq x) + \Lambda^\ast (x)  +\ln ( \frac{ p^\ast(x) \sqrt{2 \pi} }{  \sqrt{ {p^\ast}' (x) }  } ) \right) \le c < 0 .
\] 
 This will  mean that for $x$ sufficiently large, 
\[
  \mathbb{P} (Z \geq z)  \le   \frac{ e^c }{\sqrt{2 \pi} }  \frac{  \sqrt{ {p^\ast}' (x) }  }{ p^\ast(x) }~e^{ -\Lambda^\ast (z) }.
\]
whenever $z \geq x^\beta$, where $\beta  > 0$ is such that $ x^\beta \ll \Lambda ( p^\ast(x))$.Ê 
Now using the following representation for the exponential function $  e^{p X} = e^{ p (c \wedge X)} + p \int_c^\infty e^{ p z} 1_{ z \le X} d z $,  we have, choosing  $ c = x^\beta$,
 \begin{eqnarray*}
    e^{ \Lambda (p^\ast (x))}  \le  e^{p^\ast (x)  x^\beta}  +
   p^\ast(x) e^{ \Lambda(p^\ast(x)) } x  \int_{ x^{\beta-1} }^\infty  \frac{ e^c }{\sqrt{2 \pi} }  \frac{  \sqrt{ {p^\ast}' (zx) }  }{ p^\ast(zx) } ~ e^{  \chi_x (z)} d z.
 \end{eqnarray*}
Using Lemma~\ref{lemma-bounds}  we have
 \[
 p^\ast(x)  x  \int_{ x^{\beta-1} }^\infty  \frac{ e^c }{\sqrt{2 \pi} }  \frac{  \sqrt{ {p^\ast}' (zx) }  }{ p^\ast(zx) } ~ e^{  \chi_x (z)} d z ~ \sim~ p^\ast(x)  x      \frac{ \sqrt{ 2 \pi }Ê  (   \frac{ e^c }{\sqrt{2 \pi} }  \frac{  \sqrt{ {p^\ast}' (x) }  }{ p^\ast(x) }  ) }{ x \sqrt{  {p^\ast}' (x)  } } = e^c < 1 .
 \]
 This will mean that for $x$ sufficiently large,
 \[
  e^{ \Lambda (p^\ast (x))}  \le  e^{p^\ast (x)  x^\beta}  + \frac{e^c+1}{2}  e^{ \Lambda(p^\ast(x)) },
 \]
with $ x^\beta \ll \Lambda ( p^\ast(x))$ and $ \frac{e^c+1}{2} < 1$. This is obviously impossible and shows that our assumption can not be true. Thus
\[
\limsup_{x \to \infty} Ê\left( \ln ( \frac{ p^\ast(x) \sqrt{2 \pi} }{  \sqrt{ {p^\ast}' (x) }  } ) \Big[ \ln \mathbb{P} (X \geq x) + \Lambda^\ast (x)  \Big] \right) \geq 0 .
\] 
We prove the  "$\liminf$" statement in a similar way; we suppose there exists $c'$ such that
\[
\liminf_{x \to \infty} Ê\left(   \ln \mathbb{P} (X \geq x) + \Lambda^\ast (x)  +\ln ( \frac{ p^\ast(x) \sqrt{2 \pi} }{  \sqrt{ {p^\ast}' (x) }  } ) \right) \geq c' > 0 ,
\] 
which will mean that for $x$ sufficiently large, 
\[
  \mathbb{P} (Z \geq z)  \geq   \frac{ e^{c' }}{\sqrt{2 \pi} }  \frac{  \sqrt{ {p^\ast}' (x) }  }{ p^\ast(x) }~e^{ -\Lambda^\ast (z) },
\]
whenever $ z> x^\beta$. This will lead to the contradiction
\[
  e^{ \Lambda (p^\ast (x))}  \geq  e^{p^\ast (x)  x^\beta}  + \frac{e^{c'}+1}{2}  e^{ \Lambda(p^\ast(x)) },
 \]
with $ x^\beta \ll \Lambda ( p^\ast(x))$ and $ \frac{e^c+1}{2} >1 $.  

If $ x \mapsto \Lambda(\mu^\ast - \frac{1}{x}Ê) \in \bar{A}_3 (T, M; \alpha)$ then we use (\ref{taub_res-refined}). We prove in a samilar way that
\[
\limsup_{x \to \infty }  \left(  x^2 \sqrt{ {p^\ast}'(x)}   \left[
\ln~\mathbb{P} (X \geq X) + \Lambda^\ast (x) - \frac{  \sqrt{ {p^\ast}'(x)}   }{ p^\ast (x) \sqrt{ 2 \pi}}   \geq  -\frac{2 + \frac{\alpha}{(\alpha + 1)^2}Ê}{ 24 \mu^\ast \sqrt{2 \pi}}Ê
\right]
\right),
\]
and
\[
\liminf_{x \to \infty }  \left(  x^2 \sqrt{ {p^\ast}'(x)}   \left[
\ln~\mathbb{P} (X \geq X) + \Lambda^\ast (x) - \frac{  \sqrt{ {p^\ast}'(x)}   }{ p^\ast (x) \sqrt{ 2 \pi}}   \le  -\frac{2 + \frac{\alpha}{(\alpha + 1)^2}Ê}{ 24 \mu^\ast \sqrt{2 \pi}}Ê
\right]
\right).
\]
\hfill\(\Box\)

\subsection{Proof of Proposition~\ref{proposition-tauberian}}  
We shall not assume that $X$ admits a smooth density; if that would be the case we could express  $ \mathbb{E} (U e^{px})$ by integrating with respect to this density function. Instead, we proceed as in \cite{Aly13} by using  the following representation of the exponential function:
\begin{eqnarray*}
g(U)  e^{p U} &=& g(U) 1_{ U \le c}  e^{p U} +  1_{ U > c} g(U)  e^{p U}  \\
&=& g(U) 1_{ U \le c}  e^{p U} + 1_{ U > c} \left(  g(c)  e^{ pc} +   \int_c^U e^{p z} (g'(z)  + p g(z) )  d z  \right)
\\ &=&
 g(U) 1_{ U \le c}  e^{p U} + 1_{ U > c}    g(c)  e^{ pc} + \int_c^\infty e^{p z} (g'(z)  + p g(z) ) 1_{z \le U} d z
 \\ &=&
 g(U \wedge c)    e^{p~ U \wedge c} + \int_c^\infty e^{p z} (g'(z)  + p g(z) ) 1_{z \le U} d z,
\end{eqnarray*}
which holds for any $ U ,  p, c \in \mathbb{R}$. Applying this to $U= X$  we have
\begin{eqnarray}\label{reprentation_exponential}
\mathbb{E} [ g(X) e^{p X} ]&=&   \mathbb{E}  [g(X \wedge c)    e^{p~ X \wedge c}]     + \int_c^\infty e^{p z} ( g'(z) + p g(z)) \mathbb{P}(z \le X) d z.
\end{eqnarray}
Let's denote, for a function $h$ with polynomial growth, 
\[
I^h (p) :=  \int_c^\infty  h(z) e^{p z}~ \mathbb{P}(z\le X ) d z.
\]
With this notation we may write $\mathbb{E} [ g(X) e^{p X} ] $ as
\begin{equation}\label{I_zeta}
\mathbb{E} [ g(X) e^{p X} ]  =   E_g(c)   +   I^{ g' } (p) + p I^{g  } (p),
\end{equation}
where 
\[
E_g (c) =   \mathbb{E}  [g(X \wedge c)    e^{p~ X \wedge c}]    .
\]
The idea here is to choose $c$ to be large but so that $ E_g (c)$ is negligible with respect to the two terms $ I^{ g' } $  and $I^{ g } $. It is worth emphasizing that under the assumption $ \mathbb{E} | g (X) |=  m \le \infty$, we have
\begin{equation}\label{uper_bound_Rg}
|E_g (c)| \le  \mathbb{E}  |g(X)|     e^{p c}  +  g (c) e^{Êpc} = ( m +  g (c) )e^{Êpc}.
\end{equation}
Now applying  Theorem~\ref{theorem-tauberian} to $X$ we have, for $ z$ sufficiently large,
  \[
  \mathbb{P}(z\le X ) ~ \sim~ e^{ -\Lambda^\ast (z) } ~\frac{ \sqrt{ÊÊ{p^\ast }'(z)Ê} }{ p^\ast (z) \sqrt{2 \pi}}.
  \]
For $x$ sufficiently large, define $ Z \equiv Z(x)$ by
 \begin{equation}\label{Z_x}
 Z  = \Lambda' (t, \mu^\ast - \frac{1}{x} ) 
  \end{equation}
In particular, we have 
\[
 \mu^\ast - \frac{1}{x} = p^\ast  (Z).
\]
 It follows that, for any function $h$ (for which $I^h$ is well defined),   $ I^h ( \mu^\ast  -\frac{1}{x} ) \equiv I^h ( p^\ast (Z)) $ may be written as
\[
I^h( p^\ast (Z)) = \int_{c}^\infty h(z) e^{p^\ast (Z) } e^{ \Lambda^\ast (z) } f(z) d z = Z e^{\Lambda( p^\ast (Z))}
\int_{c/Z}^\infty  h(zZ)  ~ \frac{ \sqrt{ÊÊ{p^\ast }'(zZ)Ê} }{ p^\ast (zZ) \sqrt{2 \pi}}~ e^{ \chi_x (z)} d z .
\]
We then choose $ c = Z^\beta$,  where $\beta $  is such that $ x^\beta \ll \Lambda (p^\ast (x)) $.  Lemma~\ref{lemma-tauberian} ensures that
\[
\int_{c/Z}^\infty  h(zZ)  ~ \frac{ \sqrt{ÊÊ{p^\ast }'(zZ)Ê} }{ p^\ast (zZ) \sqrt{2 \pi}}~ e^{ \chi_x (z)} d z  ~ \sim~  h (Z)\frac{ \sqrt{ÊÊ{p^\ast }'(Z)Ê} }{ p^\ast (Z) \sqrt{2 \pi}} \frac{ \sqrt{ 2 \pi}  }{  Z  \sqrt{   {p^\ast}' (Z)  }} = \frac{h (Z)}{ Z p^\ast (Z)} 
\]
 It follows that for $x$ sufficiently large we have
\begin{equation}\label{ratio_Is}
\frac{I^g }{I^{ z \mapsto 1}} ( p^\ast  (Z))  ~ \sim ~ \frac{g(Z) }{ p^\ast (Z)} , 
\end{equation}
A similar statement holds for $ I^{g'} (p^\ast (Z))$. The notation $ z \mapsto 1$ refers to the "constant" function mapping $ \mathbb{R}$ into $ \{1\}$. Now using the fact that $\beta$ is such that $ Z^\beta$ is negligible with respect to $ \Lambda (p^\ast (Z)) $ we have
\begin{eqnarray*}
\frac{  \mathbb{E} [ g(X) e^{  p^\ast (Z) X} ]     }{  \mathbb{E} [   e^{  p^\ast (Z) X} ]   }  
&=& 
 \frac{ E_g ( Z^\beta)  +   I^{ g' } + p^\ast (Z) I^g   }{  E_{z \mapsto 1 }  (Z^\beta) +  p^\ast (Z) I^{ z \mapsto 1}  } 
 \\ &=&
  \frac{ 1}{  p^\ast (Z)}  \frac{I^{g'} }{I^{ z \mapsto 1}} 
+   \frac{I^{ g } }{I^{z \mapsto 1}} + o ( e^{Z^{\beta} - \Lambda (p^\ast (Z))  }) ,
\end{eqnarray*}
where we used (\ref{uper_bound_Rg}) which ensures that $ E_g (Z^\beta)| \le ( m + g(Z^\beta) )  e^{ pZ^\beta } $.  In particular $ \frac{R_g (Z^\beta) }{ I^{ z \mapsto 1} }Ê= o ( e^{p Z^{\beta} - \Lambda (p^\ast (Z))  })$ decays exponentially to 0 as $Z \to infty$.   
 Hence
\[
\frac{  \mathbb{E} [ g(X) e^{ p^\ast (Z) X} ]     }{  \mathbb{E} [   e^{ p^\ast (Z) X} ]   }   ~ \sim~ g(Z) + \frac{1}{ p^\ast(Z)} g' (Z) .  
\]
The  proof is completed by replacing $Z$ by $ \Lambda' (\mu^\ast - \frac{1}{x}) $, which comes from the definition of $Z$ being solution to $ x= (\mu^\ast - p^\ast (Z) )^{-1}$ and the fact that $ p^\ast (Z) $ is defined as: $ \Lambda' (p^\ast (Z)) = Z$.
 
The refined statement is proved by using the refined statement of Theorem~\ref{theorem-tauberian}. \hfill\(\Box\)

\section{Proof of Theorem~\ref{moment-explosion}}\label{poof-ME}  
\noindent Let's first consider the case where  $ y \mapsto \frac{B(y)}{ y}Ê$ is increasing  on $ [M, \infty[$. Then for any  $ y \geq M$ we have
\[
\frac{B(y)}{ y} \le -b = \lim_{ x \to \infty } \frac{B(x)}{ x}.
\]
Hence if we denote  $ m := \sup_{ y \le M} (y) $ we have
\[
\forall y \geq 0, ~~~~ B (y) \le m - b y.
\] 
On the other hand for any $ Z \geq M$ and $y > 0$ we have, using the fact that $ \frac{B( Z)}{  Z }   \le \frac{B(y)}{ y}$ and the definition of $ \bar{B} (y) := B(y) + b y$ (notice that the fact that $ y \mapsto \frac{B(y)}{ y}Ê$ is increasing towards $(-b)$ will mean that $ y \mapsto \frac{\bar{B}(y)}{ y}Ê$ is increasing towards $0$, hence negative. This will, in  particular, mean that there exists $ c > 0$ such that $ \bar{B} (y) \geq - c y$. Furthermore if $ \bar{B} \in A_0 (M; \beta)$ then there exists $ c > 0$ such that $ \bar{B} (y) \geq - c y^\beta$),
\[
B(y) - \frac{B( Z)}{  Z } y =  - b y + \bar{B} (y) - \frac{-b Z + \bar{B}( Z)}{  Z } y  =    \bar{B} (y)   -  \frac{\bar{B} (Z) }{Z}    y  \geq  \bar{B} (y) 1_{ y \le Z} \geq - c Z^\beta,
\]
where we used the fact that for $ y \geq Z$, $   \frac{\bar{B} (Z) }{Z} \le  \frac{\bar{B} (y) }{y} $, and hence $ \bar{B} (y)   -  \frac{\bar{B} (Z) }{Z}    y \geq 0$. Thus,
\begin{equation}\label{bound2_increas}
\forall y \geq 0, ~~~  c  - c Z^\beta + \frac{B(Z)}{ Z} y \le B(y) \le m -b y
\end{equation}
Similarly, if $ y \mapsto \frac{B(y)}{ y}Ê$ is decreasing  on $ [M, \infty[$ we find
 \begin{equation}\label{bound2_decreas}
  \forall y \geq 0, ~~~  c Z^\beta + \frac{B(Z)}{ Z} y \geq B(y) \geq -m -b y
\end{equation}

By the comparison theorem (see e.g.  \cite{Ikeda77}) the process $ X$ is bounded from below and above by two Cox-Ingersoll-Ross (CIR) processes $ V^{ \pm c Z^\beta , - \frac{B(Z)}{  Z} , X_0 }$ and  $ V^{  \mp m ,  b, X_o}$, where $ V^{a,\kappa, X_0}$ is the unique solution to the SDE
\[
d V_t = ( a - \kappa V_t ) d t + \sqrt{V_t} d W_t, ~~~~Ê  V_0 = X_0.
\] 
The moment generating function of $V_t $ is well known (see eg. \cite{Aly13}).  It  is finite for any $ \mu < \mu^\ast_t (\kappa) :=\frac{2 \kappa}{   1 - e^{ - \kappa t} }  $ and satisfies, for $t >0$ and $x$ sufficiently large,
\begin{eqnarray}\label{asym_cir}
  \ln \mathbb{E} ~e^{( \mu^\ast_t - \frac{1}{x})ÊV_t} &=& v_0 e^{ - \kappa t } { \mu^\ast_t}^2 x + 2a   \ln ({ \mu^\ast_t} x ) 
 - v_0  e^{ - \kappa  t} \mu^\ast_t .
\end{eqnarray}
For $Z$ sufficiently large,  $ \mu^\ast_t ( \frac{B( Z)}{  Z})$ is given in terms of $ \mu^\ast_t (b)$ as
\begin{eqnarray}\label{expansion-critical-moment}
\mu^\ast_t (- \frac{B( Z)}{  Z}) &=& \frac{2(  b -  \frac{\bar{B}( Z)}{ Z} )}{ 1 - e^{   - (  b -  \frac{\bar{B}( Z)}{ Z})t }} 
\\ &:=&
\mu^\ast_t (b) - \frac{1}{b} ( 1 - \frac{ b t e^{-bt}  }{1 - e^{-bt}} ) \mu^\ast_t (b) \frac{\bar{B}( Z)}{ Z} + \mathcal{O} ((\frac{\bar{B}( Z)}{ Z})^2 ).
\end{eqnarray}
On the other hand, if we denote
\begin{equation}\label{mgf_CIRr}
\Lambda (a, -\kappa ) (t,x) := \ln \mathbb{E} e^{( \mu^\ast_t  - \frac{1}{x} )Ê V^{ a , \kappa, X_0 }_t },
\end{equation}
we have
\begin{eqnarray*}
\Lambda (  \pm c Z^\beta, - \frac{B(Z)}{  Z}) (t, x)   
 &=&
 \ln \mathbb{E} ~ e^{ (\mu^\ast_t(- \frac{B( Z)}{  Z})-  ( \frac{1}{x} +\mu^\ast_t (- \frac{B( Z)}{  Z}) - \mu^\ast_t(b) Ê) ) V^{ a , - \kappa, X_0 }_t }
 \nonumber\\ &=&
  X_0 e^{ - ( b  -  \frac{\bar{B}( Z)}{ Z}) t } { \mu^\ast_t (- \frac{B( Z)}{  Z} )}^2  ( \frac{x}{  1Ê-x(\mu^\ast_t- \mu^\ast_t (- \frac{B( Z)}{  Z}) ) })  +  \nonumber\\ &&
   \pm 2 c Z^\beta \ln ( \mu^\ast_t (- \frac{B( Z)}{  Z} ) \frac{x}{  1Ê-x(\mu^\ast_t - \mu^\ast_t (- \frac{B( Z)}{  Z} ) ) })  - X_0  e^{  -   ( b  -  \frac{\bar{B}( Z)}{ Z}) t   } \mu^\ast_t (- \frac{B( Z)}{  Z} ) .
\end{eqnarray*}
On the other hand, we have
\begin{equation}\label{bound_CIR}
\Lambda ( \pm m, b) (t,x) := \ln \mathbb{E} ~e^{ ( \mu^\ast_t - \frac{1}{x})ÊV^{\pm m, b, X_0}_t } = X_0 e^{ - \kappa t } { \mu^\ast_t}^2 x \pm  2m   \ln ({ \mu^\ast_t} x ) 
 - X_0  e^{ - \kappa  t} \mu^\ast_t.
 \end{equation}

Let's now treat the cases $ y \mapsto \frac{B(y)}{ y}Ê$  decreasing and increasing separately again; In the first case, we have
\[
            V^{    -m ,  b, X_o}_t  \le X_t  \le V^{ c Z^\beta , - \frac{B(Z)}{  Z} , X_0 }_t  .
\]
In particular we have
\begin{equation}
 \Lambda ( -m, b) (t,x) \le  \ln \mathbb{E}   e^{( \mu^\ast_t - \frac{1}{x}Ê) X_t} \le   \Lambda (  c Z^\beta, - \frac{B(Z)}{  Z}) (t, x).
\end{equation}
Let's choose $ Z \equiv Z(x) = (x \log(x))^\frac{1}{1-\beta} $ (we need to have  $   \frac{\bar{B}( Z)}{ Z}  \ll \frac{1}{x} $ in order to be able to write $\frac{x}{  1Ê-x(\mu^\ast_t - \mu^\ast_t (- \frac{B( Z)}{  Z} ) ) }$ as $1 + x(\mu^\ast_t - \mu^\ast_t (- \frac{B( Z)}{  Z} )$). We have using (\ref{expansion-critical-moment})
\begin{eqnarray*}
\Lambda (  c Z^\beta, - \frac{B(Z)}{  Z}) (t,x) &=& X_0 e^{ - ( b  -  \frac{\bar{B}( Z)}{ Z}) t } { \mu^\ast_t (b )}^2 x ( 1 +  \frac{1}{b} ( 1 - \frac{ b t e^{-bt}  }{1 - e^{-bt}} ) (x \mu^\ast_t (b)- 2) \frac{\bar{B}( Z)}{ Z}  + \mathcal{O } (( x \frac{\bar{B}( Z)}{ Z} )^2)   ) 
\\ &&
 2 c Z^\beta \ln ( \mu^\ast_t (b ) x )  + Z \mathcal{ O} ( x \frac{\bar{B}( Z)}{Z}).
 \end{eqnarray*}
Hence for any  $t> 0$ there exists $ c_t > 0$ such that
\begin{equation}\label{upper-bound-MGF-Decreasing}
\Lambda (  c Z^\beta, - \frac{B(Z)}{  Z}) (t,x) \le X_0 e^{ - b   t } { \mu^\ast_t (b )}^2 x + c_t x^\frac{\beta}{1-\beta}  \log(x)^\frac{1}{1 - \beta}.
\end{equation}
The statement of theorem (for the decreasing case) follows immediately from (\ref{bound_CIR}) and (\ref{upper-bound-MGF-Decreasing}).

When   $ y \mapsto \frac{B(y)}{ y}Ê$  increasing,  we have
\[
    V^{ -c Z , - \frac{B(Z)}{  Z} , X_0 }_t  \le X_t  \le        V^{    m ,  b, x_o}_t   
\]
In particular we have
\begin{equation}
 \Lambda (  -c Z^\beta, - \frac{B(Z)}{  Z}) (t, x) \le  \ln \mathbb{E}   e^{( \mu^\ast_t - \frac{1}{x}Ê) X_t} \le   \Lambda ( m, b) (t, x)   
\end{equation}
The right-hand side of the inequality shows that $\ln \mathbb{E}   e^{( \mu^\ast_t - \frac{1}{x}Ê) X_t}  $ is finite for any $x$ sufficiently large. On the other hand, $\Lambda (  -c Z^\beta, - \frac{B(Z)}{  Z}) (t, x)$ is given as (choosing $ Z = (x \log(x))^\frac{1}{1-\beta} $ as in the decreasing case)
\begin{eqnarray*}
\Lambda ( - c Z^\beta, - \frac{B(Z)}{  Z}) (t, x) &=& X_0 e^{ - ( b  -  \frac{\bar{B}( Z)}{ Z}) t } { \mu^\ast_t (b )}^2 x + h(t) x^2  \frac{\bar{B}( Z)}{ Z}    - 2 c Z^\beta \ln ( \mu^\ast_t (b ) x )  + Z \mathcal{ O} ( x \frac{\bar{B}( Z)}{Z})
\\
&  \geq & X_0 e^{ - b   t } { \mu^\ast_t (b )}^2 x - h(t) \frac{x}{\log(x)} - 2 c x^\frac{\beta}{1 - \beta} \ln(x)^\frac{1}{1-\beta}, 
\end{eqnarray*}
where $ h(t) > 0$. If $ \beta < \frac{1}{2}Ê$ then $ x^\frac{\beta}{1 - \beta} \ll x$ and hence for any $ t > 0$ there exists $ \omega_1 (t) > 0$ such that
\begin{equation}\label{lower-bound-MGF-Increasing}
\Lambda ( - c Z^\beta, - \frac{B(Z)}{  Z}) (t, x) \geq \omega_1 (t)  x .
\end{equation}
 The statement for the increasing case follows  immediately from (\ref{bound_CIR}) and (\ref{lower-bound-MGF-Increasing}).
 \hfill\(\Box\)

\end{document}